\newtheorem{theorem}{Theorem}[section]
\newtheorem*{theorem*}{Theorem}
\newtheorem{lemma}[theorem]{Lemma}
 \newtheorem{corollary}[theorem]{Corollary}
 \newtheorem{proposition}[theorem]{Proposition}
 \newtheorem*{mainthm*}{Main Theorem}
\theoremstyle{definition}
\newtheorem{example}[theorem]{Example}
\theoremstyle{remark}
\newtheorem{remark}[theorem]{Remark}
\begin{document}

\title[Mean curvature of hypersurfaces]{Mean Curvature of Hypersurfaces in Killing Submersions with bounded shadow}



\author{Vicent Gimeno}
\address{Department of Mathematics, Universitat Jaume I-IMAC,   E-12071, 
Castell\'{o}, Spain}
\email{gimenov@mat.uji.es}
\thanks{Research partially supported by  the Universitat Jaume I Research Program Project P1-1B2012-18, and DGI-MINECO grant (FEDER) MTM2013-48371-C2-2-PDGI from Ministerio de Ciencia e Inovaci\'{o}n (MCINN), Spain.}

\subjclass[2010]{Primary 58C40, 35P15, (53A10)}

\keywords{Killing submersion, stochastic completeness, isometric immersion, mean curvature}


\dedicatory{}

\begin{abstract}
Given a complete hypersurface isometrically immersed in an ambient manifold, in this paper we provide a lower bound for the norm of the mean curvature vector field of the immersion assuming that:

1) The ambient manifold admits a Killing submersion with unit-length Killing vector field.

2)The projection of the image of the immersion is bounded in the base manifold.  

3)The hypersurface is stochastically complete, or the immersion is proper.
\end{abstract}

\maketitle
\section{Introduction}\label{sec:intro}
Lower bounds for the norm of the mean curvature of an isometric immersion of bounded image in the Euclidean space has been largely studied in order to understand the Calabi problem. It is well known that there is no  gap on the norm of the mean curvature of a bounded immersion. 
 More precisely, it is known that a complete isometric immersion in the Euclidean space with bounded image can be a minimal immersion. For instance,  in his celebrated paper \cite{Nadi}, Nadirashvili  constructed a complete minimal surface
inside a round ball in $\mathbb{R}^3$. Later on the construction of minimal immersions inside of  bounded domains of the Euclidean space was carried out by  Martin,
Morales, Tokuomaru, Alarc\'on, Ferrer and Meeks among others  (see \cite{Morales1,Morales2,Tokuomaru2007,Alarc2008,Ferrer2012}). More recently, Alarc\'on and Forstneri\v c have proved in \cite{Alarc2015} that every bordered Riemann surface carries a conformal complete minimal immersion into $\mathbb{R}^3$ with bounded image. 

Despite of this freedom in the norm of the mean curvature we want stress here that all this previous examples are geodesically complete but stochastically incomplete and are non-properly immersed in the Euclidean space. A Riemmanian manifold it is said stochastically complete if $e^{t\triangle }1=1$ for all $t\geq 0$ where $\triangle={\rm div}\nabla$ is the Laplacian and $\{P_t=e^{t\triangle}\}_{t\geq 0}$ is the heat semi-group of the Laplacian (see section \ref{stochastic} for a more detailed description of stochastic completeness).

Historically, the first attempt to construct a minimal immersion with bounded image dealt with the construction of an example with bounded projection. In 1980, (before the example of Nadirashvili),   Jorge and Xavier in \cite{JorgeXavier80} exhibit a non-flat complete minimal surface lying between two parallel planes. This example is stochastically complete (see \cite{Atsuji1999}) but is non-properly immersed.

The relevant point here is that stochastic completeness or the properness of the isometric immersion implies lower bounds for the norm of the mean curvature of bounded immersions.
In fact, in \cite{Alias2009}, Al\'ias, Bessa and Dajczer  proved that given a complete isometric immersion $\alpha:\Sigma^{n-1}\to\mathbb{R}^{n}$ with $\Sigma$ stochastically complete and with bounded projection of the image of $\alpha$ in a $2$-plane, namely, there exists a geodesic ball $B_R^{\mathbb{R}^2}$ of radius $R$  and a projection $\pi:\mathbb{R}^{n}\to \mathbb{R}^2$ such that $\pi(\varphi(\Sigma))\subset B_R^{\mathbb{R}^2}$,   the norm of the mean curvature $\vec H$ of the immersion is bounded by
$$
\sup_M\Vert \vec{H}\Vert\geq h(R)
$$
where $h(R)$ is the norm of the mean curvature of the generalized cylinder $\pi^{-1}(\partial B_R^{\mathbb{R}^2})$ in $\mathbb{R}^n$. More generally, in \cite{Alias2009}, it is proved that
\begin{theorem}[See \cite{Alias2009}]Let $\varphi: \Sigma^m\to N^{n-l}\times \mathbb{R}^l$ be an isometric immersion of a complete Riemannian manifold $\Sigma$ of dimension $m>l+1$. Let $B^N_R(p)$ be the geodesic  ball of $N^{n-l}$ centered at $p$ with radius $R$. Given $q\in \Sigma$, assume that the radial sectional curvature $K^{\rm rad}_N$ along the radial geodesics issuing from $p=\pi_N(\varphi(q))\in N^{n-1}$ is bounded as $K^{\rm rad}_N\leq \kappa$ in $B_R^N(p)$. Suppose that
$$
\varphi(\Sigma)\subset B^N_R(p)\times \mathbb{R}^l
$$
for $R<\min\{{\rm inj}_N(p),\pi/2\sqrt{\kappa}\}$, where we replace $\pi/2\sqrt{\kappa}$ by $+\infty$ if $\kappa\leq 0$. Then, if $\Sigma$ is stochastically complete or $\varphi:\Sigma\to N^{n-l}\times \mathbb{R}^l$, the supremum of the norm of the mean curvature vector field is bounded by 
$$
\sup_\Sigma\Vert\vec H\Vert\geq \frac{m-l}{m}C_{\kappa}(R)
$$
where 
$$
C_\kappa(R)=\left\{\begin{array}{lcl}
\sqrt{\kappa}\cot(\sqrt{\kappa}R)&{\rm if}& \kappa>0,\, R<\frac{\pi}{2\sqrt{\kappa}}\\
1/R&{\rm if }&\kappa=0\\
\sqrt{-\kappa}\coth(\sqrt{-\kappa}R)&{\rm if}& \kappa<0.
\end{array}\right.
$$
\end{theorem}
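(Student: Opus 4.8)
The plan is to test the stochastic completeness of $\Sigma$ (respectively the Omori--Yau maximum principle in the proper case) against a radial comparison function pulled back from the base, chosen so that the comparison is sharp. Let ${\rm sn}_\kappa$ denote the solution of ${\rm sn}_\kappa''+\kappa\,{\rm sn}_\kappa=0$ with ${\rm sn}_\kappa(0)=0$ and ${\rm sn}_\kappa'(0)=1$, so that $C_\kappa={\rm sn}_\kappa'/{\rm sn}_\kappa$ and ${\rm sn}_\kappa>0$ on $(0,R]$ under the stated bound $R<\min\{{\rm inj}_N(p),\pi/2\sqrt{\kappa}\}$. I set $\rho=r\circ\pi_N\circ\varphi$, where $r={\rm dist}_N(p,\cdot)$, so that $0\le\rho\le R$ on $\Sigma$, and I consider
$$
u=f(\rho),\qquad f(t)=\int_0^t{\rm sn}_\kappa(s)\,ds.
$$
Since ${\rm sn}_\kappa$ is odd, $f$ is an even smooth function of $r$, hence $u$ is smooth across the fiber $\rho=0$ (where $r$ itself fails to be smooth), and $u$ is bounded above by $f(R)$.

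First I would write the second-order relation between $u$ and the geometry of the immersion. By the composition (Gauss) formula for an isometric immersion, for an orthonormal frame $\{e_i\}_{i=1}^m$ on $\Sigma$,
$$
\Delta_\Sigma u=\sum_{i=1}^m{\rm Hess}\,f(r)(e_i,e_i)+m\langle{\rm grad}\,f(r),\vec H\rangle,
$$
with $f(r)$ and its derivatives computed in the product $N^{n-l}\times\mathbb{R}^l$. Because $r$ depends only on the $N$-factor, ${\rm grad}\,f(r)=f'(r)\partial_r$ is a multiple of the unit radial field $\partial_r$, giving $m\langle{\rm grad}\,f(r),\vec H\rangle\ge -m\,{\rm sn}_\kappa(\rho)\|\vec H\|$, while ${\rm Hess}\,f(r)=f''(r)\,dr\otimes dr+f'(r)\,{\rm Hess}^N r$.

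The heart of the argument is the curvature estimate together with a dimension count. Decomposing $e_i=a_i\partial_r+v_i+w_i$ into radial, $N$-spherical, and $\mathbb{R}^l$ parts, the Hessian comparison theorem (valid since $K^{\rm rad}_N\le\kappa$ and $r<{\rm inj}_N(p)$) gives ${\rm Hess}^N r(v_i,v_i)\ge C_\kappa(\rho)|v_i|^2$, and a projection argument bounds the $\mathbb{R}^l$-content of $T\Sigma$ by $\sum_i|w_i|^2\le l$, whence $\sum_i(a_i^2+|v_i|^2)=\sum_i|e_i^{N}|^2\ge m-l$. The key device is to choose $f$ precisely so that $f''=C_\kappa f'$---which is exactly $f'={\rm sn}_\kappa$---so that the radial term $f''\sum a_i^2$ is absorbed into the spherical term and one obtains the clean bound
$$
\Delta_\Sigma u\ \ge\ {\rm sn}_\kappa(\rho)\big[(m-l)\,C_\kappa(\rho)-m\|\vec H\|\big].
$$
This is the step where the sharp coefficient $m-l$ (rather than the $m-l-1$ that the naive choice $f(t)=t$ would produce) is won, and it is the one I expect to be the main obstacle: making the radial and spherical contributions combine requires the ODE choice of $f$ and a careful check that $f'\ge0$ keeps every comparison inequality pointing the right way.

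Finally I would invoke the weak maximum principle characterizing stochastic completeness. As $u$ is bounded above, there is a sequence $x_k$ with $u(x_k)\to\sup_\Sigma u$ and $\Delta_\Sigma u(x_k)<1/k$; since $f$ is strictly increasing, $\rho(x_k)\to\rho^\ast:=\sup_\Sigma\rho$, and $\rho^\ast>0$ because $m>l$ forbids $\varphi$ from collapsing $\Sigma$ into a single fiber $\cong\mathbb{R}^l$. Dividing the displayed inequality by ${\rm sn}_\kappa(\rho(x_k))>0$ and letting $k\to\infty$ yields $m\sup_\Sigma\|\vec H\|\ge(m-l)C_\kappa(\rho^\ast)\ge(m-l)C_\kappa(R)$, the last step using that $C_\kappa$ is decreasing and $\rho^\ast\le R$. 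In the proper case one may assume $\sup_\Sigma\|\vec H\|<\infty$ (otherwise the bound is trivial) and use the exhaustion $|t|^2$ on the unbounded $\mathbb{R}^l$-factor, whose Laplacian grows at most linearly in $|t|$, to verify the Omori--Yau principle for $\Sigma$; the same sequence argument then applies verbatim.
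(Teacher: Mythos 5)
Your proposal proves a statement that the paper itself never proves: the theorem is quoted from Al\'ias--Bessa--Dajczer \cite{Alias2009} as background, so the only comparison available is with the paper's proofs of its own analogous results (Theorems \ref{maintheo} and \ref{teo2}), which adapt the same machinery to Killing submersions. Your argument is correct, and it is essentially the argument of the cited source: the test function $f$ with $f'={\rm sn}_\kappa$, the Hessian comparison under $K^{\rm rad}_N\leq\kappa$, the projection bound $\sum_i\vert w_i\vert^2\leq l$, the Pigola--Rigoli--Setti weak maximum principle, the exclusion of $\rho^\ast=0$ because an $m$-manifold cannot immerse isometrically into an $l$-dimensional fiber when $m>l$, and, in the proper case, the exhaustion $\vert t\vert^2$ plus the Al\'ias--Mastrolia--Rigoli criterion (the paper's Theorem \ref{AliasTeo}) exactly parallel the paper's Proposition \ref{novaprop5.2}. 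The one structural difference worth recording is the choice of comparison function. In the paper's codimension-one setting the error term is the Hessian of the test function in the direction of the projected unit normal $d\pi(\nu)$, and the paper kills it by adapting $F$ to the \emph{lower} curvature bound $k$ (taking $F'={\rm sn}_k$, so that the coefficient of $\langle\nabla r_o,d\pi(\nu)\rangle^2$ in Corollary \ref{corunua} vanishes and the leftover term ${\rm sn}_k'(1-\Vert d\pi(\nu)\Vert^2)$ is discarded by sign); this buys control of a term whose direction is unknown, at the price of never producing a sharp constant by itself. In your higher-codimension cylinder setting the error is instead the $\mathbb{R}^l$-content of the tangent space, and you adapt $f$ to the \emph{upper} bound $\kappa$ (so $f''=C_\kappa f'$ merges the radial and spherical contributions), which is precisely what yields the sharp coefficient $(m-l)/m$ rather than the $(m-l-1)/m$ of the naive linear choice --- your identification of this as the crux is accurate. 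A further small bonus of your route is that it needs no lower curvature bound at all, whereas the paper's proof must first invoke compactness of $\overline{B_R^{\mathbb{B}}(o)}$ to get a finite $k$. Finally, you silently and correctly repaired the typo in the statement: ``or $\varphi:\Sigma\to N^{n-l}\times\mathbb{R}^l$'' should read ``or $\varphi$ is proper.''
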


In this paper we are interested in a similar case but when we have an isometric immersion $\varphi:\Sigma^n\to M^{n+1}$ and the ambient manifold $M$ admits a Killing submersion $\pi:M^{n+1}\to \mathbb{B}^n$ (see sections \ref{mainsec} and \ref{prelimsec}). Our goal is to obtain lower bounds for the mean curvature of the immersion $\varphi$ when the projection of the image $\varphi(\Sigma)$ is bounded, \emph{i.e.}, there exist a geodesic ball $B_R^{\mathbb{B}}$ in $\mathbb{B}$ such that $\varphi(\Sigma)\subset \pi^{-1}(B_R^\mathbb{B})$.  In our main results, we prove that if we assume that $\Sigma$ is stochastically complete, or the immersion is proper, then the norm of the mean curvature is bounded from below by a function that depends on $R$, on an upper bound for the sectional curvatures of $B_R^\mathbb{B}$, and on the bundle curvature of the Killing submersion $\pi:M^{n+1}\to \mathbb{B}$.

\subsection*{Outline of the paper}In \S \ref{mainsec} are stated the main results of this paper; theorem \ref{maintheo} and theorem \ref{teo2}. In \S \ref{prelimsec} are developed every necessary lemma, proposition, and theorems in order to prove in \S \ref{sec4}  theorem \ref{maintheo} and in \S \ref{sec5} theorem \ref{teo2}. 

\section{Main Results}\label{mainsec}
Let $(M,g_M)$ be $(n+1)$-dimensional Riemannian manifold. It is said that $M$ admits a Killing submersion with an unit-length Killing vector field if there exist a Riemmanian submersion $\pi:(M,g_M)\to (\mathbb{B},g_\mathbb{B})$ into a $n$-dimensional base manifold $\mathbb{B}$, such that the fibers of the submersion are integral curves of an unit-length Killing vector field $\xi\in \mathfrak{X}(M)$.  The tangent space $T_pM$ at any point $p\in M$ can be decomposed $T_pM=\mathcal{V}(p)\oplus\mathcal{H}(p)$ in its vertical $\mathcal{V}(p)={\rm ker}(d\pi_p)$ and horizontal part $\mathcal{H}(p)=({\rm ker}(d\pi_p))^\perp$ respectively.

The Killing vector field $\xi$ induces a smooth $(1,1)$-tensor field $\nabla \xi$, see section \ref{prelimsec}, given by
$$
\nabla \xi(p):\mathcal{H}(p)\to\mathcal{H}(p),\quad v\to\nabla_v\xi,
$$ 
where $\nabla$ denotes the Levi-Civita connection in $M$.  With the Hilbert-Schmidt norm $\Vert \nabla \xi(p)\Vert^2$ of the linear map $\nabla \xi(p)$ we can define the \emph{$\tau$-function} as
$$
\tau(p) :=\left(\frac{\Vert \nabla \xi(p)\Vert^2}{2}\right)^{\frac{1}{2}}
$$
For any point $p\in M$ and any $v\in \mathcal{H}(p)$, the sectional curvature of the plane $v\wedge \xi$ spanned by $v$ and $\xi$ is non-negative and bounded from above by (see section \ref{prelimsec})
$$
{\rm sec}(v\wedge \xi)\leq \tau^2(p).
$$ 
The above inequality is an equality if ${\rm dim}(\mathbb{B})=2$ and in such a case $\tau$ is also known\footnote{In the case of ${\rm dim}(\mathbb{B})=2$ usually is used a signed $\tau$. In such convention our $\tau$ is just $\vert \tau\vert$.} as the \emph{bundle curvature of the submersion} and $\tau^2$ coincides with the sectional curvature of the vertical planes.

In this paper we are interested in hypersurfaces $\varphi:\Sigma\to M$ isometrically immersed in $M$, and such that $\pi\circ\varphi(\Sigma)$ is bounded in $\mathbb{B}$. Namely, there exists a geodesic ball $B_R^{\mathbb{B}}(o)$ in $\mathbb{B}$ such that $\varphi(\Sigma)\subset \pi^{-1}(B_R^{\mathbb{B}}(o))$.  In this case the restriction $\tau\circ \varphi$  to the hypersurface of the $\tau$-function is absolutely bounded and we denote by
$$
\tau_\Sigma:=\sup_{\Sigma}  \tau\circ\varphi.
$$
In the statement of theorem \ref{maintheo}, we  compare the norm of the mean curvature of $\Sigma$ with the norm of the mean curvature of the inclusion map of the cylinder $\partial B_R^{\mathbb{M}^n(\kappa)}\times \mathbb{R}$  in $\mathbb{M}^n(\kappa)\times \mathbb{R}$, where $\mathbb{M}^n(\kappa)$ is the $n$-dimensional simply -connected space form of sectional curvature $\kappa$, \emph{i.e.},
$$
\mathbb{M}^n(\kappa):=\left\{\begin{array}{lcl}
\mathbb{S}^{n}(\kappa)&{\rm if}&\kappa>0\\
\mathbb{R}^{n}&{\rm if}&\kappa=0\\
\mathbb{H}^{n}(\kappa)&{\rm if}& \kappa<0.
\end{array}
\right.
$$
Our main theorem is the following
\begin{theorem}\label{maintheo}
Let $\Sigma$ be a complete and non-compact Riemannian manifold. Let $\varphi:\Sigma\to M$ be an isometric immersion. Suppose that $M$ admits a Killing submersion $\pi:M\to \mathbb{B}$ with unit-length Killing vector field, suppose moreover that $\varphi(\Sigma)\subset \pi^{-1}(B_R^{\mathbb{B}}(o))$ for some geodesic ball $B_R^{\mathbb{B}}(o)$ of radius $R$ centered at $o\in \mathbb{B}$. Assume that the sectional curvatures are bounded ${\rm sec }\leq \kappa$ in $B_R^{\mathbb{B}}(o)$ and that
$$
R<\min\left\{{\rm inj}(o),\,\frac{\pi}{2\sqrt{\kappa}}\right\}
$$
where ${\rm inj}(o)$ is the injectivity radius of $o$ and we replace $\pi/2\sqrt{\kappa}$ by $+\infty$ if $\kappa<0$. Then, if $\Sigma$ is stochastically complete, the supremum of the norm of the mean curvature vector field of $\Sigma$ satisfies 
$$
\sup_\Sigma\Vert \vec H\Vert\geq h_{\kappa}^n(R)-\frac{\tau_\Sigma}{n}
$$
where $h_{\kappa}^n(R)$ is the norm of the mean curvature of the generalized cylinder $\partial B^{\mathbb{M}^n(\kappa)}_R\times \mathbb{R}$ in $\mathbb{M}^n(\kappa)\times \mathbb{R}$.
\end{theorem}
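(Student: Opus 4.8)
The plan is to test the weak maximum principle on a radial function built from the ``shadow'' distance in the base. Write $r=d_{\mathbb B}(o,\cdot)$ for the distance to $o$ in $\mathbb B$, set $\rho:=r\circ\pi:M\to\mathbb R$, so that $u:=\rho\circ\varphi$ is bounded above by $R$ on $\Sigma$. Let $\mathrm{sn}_\kappa$ be the solution of $\mathrm{sn}_\kappa''+\kappa\,\mathrm{sn}_\kappa=0$ with $\mathrm{sn}_\kappa(0)=0$, $\mathrm{sn}_\kappa'(0)=1$, and $\mathrm{cn}_\kappa=\mathrm{sn}_\kappa'$, so that $C_\kappa=\mathrm{cn}_\kappa/\mathrm{sn}_\kappa$ and $h^n_\kappa(R)=\tfrac{n-1}{n}C_\kappa(R)$. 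I would compare $\Sigma$ against $F:=f\circ\rho$ with $f'=\mathrm{sn}_\kappa$ and $f(0)=0$; since $\mathrm{sn}_\kappa$ is odd, $f$ is a smooth even function of $r$, so $F$ extends smoothly across the fibre $\pi^{-1}(o)$ and $F\circ\varphi$ is bounded above by $f(R)$. Everything then reduces to a lower bound for $\triangle_\Sigma(F\circ\varphi)=\sum_i\mathrm{Hess}_M F(e_i,e_i)+n\langle\nabla F,\vec H\rangle$, where $\{e_i\}_{i=1}^n$ is an orthonormal frame of $T\Sigma$ and $\vec H$ is the normalised mean curvature vector.

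Next I would collect the submersion geometry. Because $\xi$ has unit length and is Killing one has $\nabla_\xi\xi=0$, so the fibres are geodesics; hence $\nabla F=\mathrm{sn}_\kappa(\rho)\nabla\rho$ is horizontal with $|\nabla\rho|\equiv1$ and $\mathrm{Hess}_M\rho(\xi,\xi)=-\langle\nabla_\xi\xi,\nabla\rho\rangle=0$. For horizontal arguments the Hessian of $\rho$ equals the lifted Hessian of $r$, so the Hessian comparison theorem (applicable since $\mathrm{sec}\le\kappa$ and $R<\min\{\mathrm{inj}(o),\pi/2\sqrt\kappa\}$ keep us before the cut locus) gives $\mathrm{Hess}_M\rho(w^{\mathcal H},w^{\mathcal H})\ge C_\kappa(\rho)\big(|w^{\mathcal H}|^2-\langle\nabla\rho,w\rangle^2\big)$. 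The only genuinely new ingredient is the mixed term $\mathrm{Hess}_M\rho(w^{\mathcal H},\xi)=-\langle(\nabla\xi)w^{\mathcal H},\nabla\rho\rangle$, obtained by differentiating $\langle\nabla\rho,\xi\rangle\equiv0$; I would control it by the operator norm of $\nabla\xi$, and since $\nabla\xi$ is skew-symmetric on $\mathcal H$ its operator norm is at most $\big(\tfrac12\Vert\nabla\xi\Vert^2\big)^{1/2}=\tau$. Feeding these facts into $\mathrm{Hess}_M F=f''\,d\rho^2+f'\,\mathrm{Hess}_M\rho$ and using $f''=\mathrm{cn}_\kappa$ and $f'C_\kappa=\mathrm{cn}_\kappa$, the radial terms cancel and I obtain the pointwise estimate $\mathrm{Hess}_M F(w,w)\ge\mathrm{cn}_\kappa(\rho)\,|w^{\mathcal H}|^2-2f'(\rho)\,|\langle w,\xi\rangle|\,\tau\,|w^{\mathcal H}|$.

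To trace this over $T\Sigma$ with the sharp constant I would use a frame adapted to $\xi$: writing $\xi=\xi^\top+c\,\nu$ with $|\xi^\top|^2=1-c^2$, take $e_n=\xi^\top/|\xi^\top|$ and $e_1,\dots,e_{n-1}\perp\xi^\top$. Then only $e_n$ contributes to the mixed term, $|e_n^{\mathcal H}|=|c|$, and $\sum_i|e_i^{\mathcal H}|^2=n-1+c^2$, so the elementary bound $2|c|\sqrt{1-c^2}\le1$ gives $\sum_i\mathrm{Hess}_M F(e_i,e_i)\ge(n-1)\mathrm{cn}_\kappa(\rho)-\tau\,\mathrm{sn}_\kappa(\rho)$. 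Together with $\langle\nabla F,\vec H\rangle\ge-\mathrm{sn}_\kappa(\rho)\Vert\vec H\Vert$ this yields the pointwise inequality $\triangle_\Sigma(F\circ\varphi)\ge(n-1)\mathrm{cn}_\kappa(u)-\tau_\Sigma\,\mathrm{sn}_\kappa(u)-n\,\mathrm{sn}_\kappa(u)\Vert\vec H\Vert$. Applying the weak maximum principle for the stochastically complete $\Sigma$ to the bounded function $F\circ\varphi$ produces a sequence $x_k$ with $u(x_k)\to u^*\le R$ and $\limsup_k\triangle_\Sigma(F\circ\varphi)(x_k)\le0$; dividing by $n\,\mathrm{sn}_\kappa(u^*)>0$ and using the monotonicity of $C_\kappa$ gives $\sup_\Sigma\Vert\vec H\Vert\ge\tfrac{n-1}{n}C_\kappa(u^*)-\tau_\Sigma/n\ge h^n_\kappa(R)-\tau_\Sigma/n$.

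The main obstacle I expect is getting the sharp factor $\tau_\Sigma/n$ rather than a lossy $2\tau_\Sigma/n$: this forces both the skew-symmetry refinement $\Vert\nabla\xi\Vert_{\mathrm{op}}\le\tau$ and the $\xi$-adapted frame combined with $2|c|\sqrt{1-c^2}\le1$, i.e.\ a careful bookkeeping of how the horizontal/vertical splitting interacts with the tangent/normal splitting of the immersion. A second, structural, point is that stochastic completeness supplies only the Laplacian form of the weak maximum principle, with no control on $|\nabla_\Sigma u|$ (unlike Omori--Yau); this is precisely why the argument must run through the transformed function $F$, whose Hessian is pinched below by a multiple of the metric, rather than through $u$ itself.
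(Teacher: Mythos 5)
Your proof is correct, and it reaches the sharp constant $\tau_\Sigma/n$ by exactly the two inequalities that matter: $\Vert(\nabla\xi)v\Vert\le\tau\Vert v\Vert$ for horizontal $v$ and $2\vert c\vert\sqrt{1-c^2}\le 1$. But your technical route differs from the paper's in a genuine way. The paper (Proposition \ref{propdua} and Corollary \ref{corunua}) writes $\triangle_\Sigma(f\circ\varphi)$ as the base Laplacian \emph{minus} the normal Hessian term ${\rm Hess}_{\mathbb B}\overline f(d\pi(\nu),d\pi(\nu))$ plus a term in the modified mean curvature $\vec H_\tau$; bounding the subtracted normal term from above forces a \emph{lower} sectional curvature bound $k$ on the ball (available by compactness), and the paper's test function is $F_k$ with $F_k'={\rm sn}_k$, chosen precisely so that the cross term $\bigl(F'\tfrac{{\rm sn}_k'}{{\rm sn}_k}-F''\bigr)\langle\nabla r_o,d\pi(\nu)\rangle^2$ vanishes. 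You instead sum ${\rm Hess}_M F$ directly over a tangent frame of $\Sigma$, so only \emph{lower} Hessian bounds are needed, and your choice $f'={\rm sn}_\kappa$ makes the radial terms cancel against the Hessian comparison; consequently you never introduce $k$ at all, which is a modest gain in economy and hypotheses-bookkeeping (the paper's $k$ silently uses compactness of the closed ball). Your eigenvalue argument for $\Vert\nabla\xi\Vert_{\rm op}\le\tau$ via skew-symmetry is also cleaner than the componentwise computation in Proposition \ref{propunua}, which proves the same bound. What the paper's heavier factorization buys is reusability: Corollary \ref{corunua} and the $\vec H_\tau$ formalism are deployed again in the proof of Theorem \ref{teo2}, where the normal-direction decomposition is needed. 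Two trivial loose ends in your write-up, both shared with or no worse than the paper: the adapted frame $e_n=\xi^\top/\vert\xi^\top\vert$ is undefined when $\xi$ is normal to $\Sigma$, but then every $e_i$ is horizontal, the mixed term disappears, and your estimate improves; and the final division by ${\rm sn}_\kappa(u^*)$ presupposes $u^*>0$, which fails only in the degenerate case where $\varphi(\Sigma)$ lies in a single one-dimensional fiber.
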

Because $\lim_{t\to 0}h^n_\kappa(t)=+\infty$, as an immediate corollary of the main theorem we can state
\begin{corollary}Let $\pi:M\to \mathbb{B}$ be a Killing submersion with a Killing vector field of unit-length, suppose that $\mathbb{B}$ has bounded geometry, \emph{i.e.},
\begin{enumerate}
\item The injectivity radius ${\rm inj}(\mathbb{B})$ of $\mathbb{B}$ is positive, $r_{\rm inj}:={\rm inj}(\mathbb{B})>0$.
\item The sectional curvatures of $\mathbb{B}$ are bounded from above by a positive constant, ${\rm sec}(\mathbb{B})\leq \kappa<0$.
\end{enumerate}
Suppose moreover that,
$$
\tau_M:=\sup_M\tau<\infty
$$
Then there exists a constant $R_c(r_{\rm inj},\kappa,\tau_M)\in \mathbb{R}_+\cup \{+\infty\}$ depending only on $r_{\rm inj}$, $\kappa$, and $\tau_M$ such that if a complete Riemannian manifold admits an isometric minimal immersion $\varphi:\Sigma\to \pi^{-1}(B^{\mathbb{B}}_{R_c(r_{\rm inj},\kappa,\tau_M)}(x))$ for some $x\in \mathbb{B}$, then $\Sigma$ is stochastically incomplete.
\end{corollary}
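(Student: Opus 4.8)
The plan is to derive the corollary directly from Theorem~\ref{maintheo} by contraposition. Suppose $\varphi:\Sigma\to\pi^{-1}(B^{\mathbb{B}}_R(x))$ is a complete minimal immersion. If $\Sigma$ were stochastically complete, then Theorem~\ref{maintheo} (with $o=x$) would apply, provided the ball of radius $R$ satisfies the hypotheses $R<\min\{\mathrm{inj}(\mathbb{B}),\pi/2\sqrt{\kappa}\}$ and $\mathrm{sec}\leq\kappa$ on the ball. The bounded-geometry assumptions guarantee exactly this: since $r_{\mathrm{inj}}=\mathrm{inj}(\mathbb{B})>0$ and $\mathrm{sec}(\mathbb{B})\leq\kappa$ globally, the required curvature and injectivity bounds hold on \emph{every} ball whose radius is below $\min\{r_{\mathrm{inj}},\pi/2\sqrt{\kappa}\}$. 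For a minimal immersion $\sup_\Sigma\Vert\vec H\Vert=0$, so the conclusion of the theorem becomes $0\geq h^n_\kappa(R)-\tau_\Sigma/n$, i.e. $h^n_\kappa(R)\leq\tau_\Sigma/n$.

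The next step is to control $\tau_\Sigma$ uniformly. Since $\varphi(\Sigma)\subset\pi^{-1}(B^{\mathbb{B}}_R(x))\subset M$ and $\tau_M=\sup_M\tau<\infty$, we have the crude but sufficient bound $\tau_\Sigma=\sup_\Sigma\tau\circ\varphi\leq\tau_M$. Feeding this into the displayed inequality yields the necessary condition $h^n_\kappa(R)\leq\tau_M/n$ for a stochastically complete minimal immersion to exist inside the tube of radius $R$. I would then define the critical radius $R_c$ precisely as the threshold at which this obstruction first disappears:
\begin{equation*}
R_c(r_{\mathrm{inj}},\kappa,\tau_M):=\sup\left\{R<\min\left\{r_{\mathrm{inj}},\,\tfrac{\pi}{2\sqrt{\kappa}}\right\}:\ h^n_\kappa(R)>\tfrac{\tau_M}{n}\right\}.
\end{equation*}
Here the hypothesis $\lim_{t\to0}h^n_\kappa(t)=+\infty$, quoted just before the corollary, is what makes this set nonempty and $R_c$ strictly positive: for all sufficiently small $R$ the inequality $h^n_\kappa(R)>\tau_M/n$ holds, so the obstruction is active on a genuine neighbourhood of the fiber.

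With $R_c$ so defined, the logic closes immediately: if $R<R_c$ then $h^n_\kappa(R)>\tau_M/n\geq\tau_\Sigma/n$, which contradicts the necessary condition $h^n_\kappa(R)\leq\tau_\Sigma/n$ forced by Theorem~\ref{maintheo} under stochastic completeness. Hence no complete minimal $\Sigma$ admitting such an immersion can be stochastically complete, which is exactly the claim. I expect the only genuine subtlety to be the bookkeeping needed to ensure that Theorem~\ref{maintheo} is legitimately applicable on the ball $B^{\mathbb{B}}_R(x)$ for an \emph{arbitrary} center $x$: the two bounded-geometry hypotheses are precisely what decouple the conclusion from the choice of $x$, turning the pointwise statement of the main theorem into the uniform threshold $R_c$. (One should also note that the corollary as stated writes $\kappa<0$ in hypothesis~(2), which is presumably a typographical slip for $\kappa>0$, consistent with the $\pi/2\sqrt{\kappa}$ appearing throughout; the argument above is insensitive to this and goes through for any $\kappa$ for which $h^n_\kappa$ is defined and blows up at $0$.)
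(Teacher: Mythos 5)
Your proposal is correct and takes essentially the same approach as the paper: there the corollary is presented as an immediate consequence of Theorem \ref{maintheo} via contraposition, using the bound $\tau_\Sigma\leq\tau_M$ and the blow-up $\lim_{t\to 0}h^n_\kappa(t)=+\infty$ to get a positive threshold, and the remark following the corollary records exactly your constant, $\min\left\{r_{\rm inj}\,,\,\sup\left\{t<\frac{\pi}{2\sqrt{\kappa}}\,:\,h^n_\kappa(t)>\frac{\tau_M}{n}\right\}\right\}$, which agrees with your definition by monotonicity of $h^n_\kappa$. The one loose end (shared with the paper, which proves nothing beyond calling it immediate) is that your closing step only excludes stochastic completeness for images in tubes of radius $R<R_c$, while the statement concerns the ball of radius $R_c$ itself; since only the existence of some positive constant is claimed, this is repaired by taking $R_c$ to be any value strictly below your supremum, so that the strict inequalities $R_c<\min\{r_{\rm inj},\pi/2\sqrt{\kappa}\}$ and $h^n_\kappa(R_c)>\tau_M/n$ both hold.
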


\begin{remark}
From the main theorem we can give an estimate for $R_c(r_{\rm inj},\kappa,\tau_M)$ as 
$$
\min\left\{r_{\rm inj}\,,\,\sup_{0<t<\frac{\pi}{2\sqrt{\kappa}}}\left\{t\, :\, h_{\kappa}^n(t)>\frac{\tau_M}{n}\right\}\right\}.
$$ 
We must remark here that we do not claim that this estimate is sharp.
\end{remark}
\begin{example}[Application of the theorem \ref{maintheo} to the $\mathbb{E}(\kappa,\tau)$ spaces]
A simply-connected homogeneous $3$-dimensional space with $4$-dimensional isometry group is always a Riemmanian fibration over to  a simply-connected $2$-dimensional real space form $\mathbb{M}^2(\kappa)$ and the fibers are integral curves of a unit Killing field, see \cite{Daniel2007,Scott1983}. Namely, every  simply-connected homogeneous $3$-dimensional space with $4$-dimensional isometry group admits a Killing submersion with unit-length Killing vector field. In fact, this spaces can be classified, up to isometries, by their values $\kappa$ and $\tau$ and can be denoted as $\mathbb{E}(\kappa,\tau)$ spaces. 

When the bundle curvature vanishes $\tau=0$ and $\kappa\neq 0$ we obtain the Riemannian products $\mathbb{S}^2(\kappa)\times \mathbb{R}$ for $\kappa>0$, and $\mathbb{H}^2(\kappa)\times \mathbb{R}$ for $\kappa<0$. In the case of $\tau\neq 0$ we have Berger spheres for $\kappa>0$, the Heisenberg group ${\rm Nil}_3$ for $\kappa=0$ and the universal cover $\widetilde{PSL_2(\mathbb{R})}$ of $PSL_2(\mathbb{R})$ for $\kappa<0$.
 
If we have a complete and stochastically complete surface $\Sigma$ immersed in $\mathbb{E}(\kappa,\tau)$ inside of $\pi^{-1}(B_R^{\mathbb{M}^2(\kappa)})$, with $R<\pi/{2\sqrt{\kappa}}$ when $\kappa>0$, then by the main theorem the norm of the mean curvature vector field satisfies
$$
\sup_{\Sigma}\Vert \vec H\Vert\geq h_\kappa^2(R)-\frac{\tau}{2}=\frac{1}{2}\left(C_\kappa(R)-\tau\right).
$$
In the particular case when $\kappa<0$,
$$
\sup_{\Sigma}\Vert \vec H\Vert\geq h_\kappa^2(R)-\frac{\tau}{2}>\frac{1}{2}\left(\sqrt{-\kappa}-\tau\right).
$$
And therefore if $-\kappa\geq \tau^2\geq 0$, any minimal surface immersed in $\pi^{-1}(B_R^{\mathbb{M}^2_\kappa})$ for any $R>0$ is stochastically incomplete. 
\end{example}
In the case when the Killing submersion admits a smooth section such that the normal exponential map is a diffeomorphism, the lower bound for the norm of the mean curvature vector field can be improved replacing the hypothesis on the stochastic completeness of $\Sigma$ in theorem \ref{maintheo} by the properness of the immersion as it is stated in the following theorem 
\begin{theorem}\label{teo2}Let $\Sigma$ be a complete and non-compact Riemannian manifold. Let $\varphi:\Sigma\to M$ be a proper isometric immersion. Suppose that $M$ admits a Killing submersion $\pi:M\to \mathbb{B}$ with unit-length Killing vector field, suppose moreover that $\varphi(\Sigma)\subset \pi^{-1}(B_R^{\mathbb{B}}(o))$ for some geodesic ball $B_R^{\mathbb{B}}(o)$ of radius $R$ centered at $o\in \mathbb{B}$. Assume that the sectional curvatures are bounded ${\rm sec }\leq \kappa$ in $B_R^{\mathbb{B}}(o)$ and that
$$
R<\min\left\{{\rm inj}(o),\,\frac{\pi}{2\sqrt{\kappa}}\right\}
$$
where ${\rm inj}(o)$ is the injectivity radius of $o$ and we replace $\pi/2\sqrt{\kappa}$ by $+\infty$ if $\kappa<0$.  Suppose moreover, that $\pi$ admits a smooth section $s:\overline{B_R^\mathbb{B}(o)}\to M$ and the normal exponential map
$$
\exp: s(B_R^\mathbb{B}(o))\times \mathbb{R}\to \pi^{-1}((B_R^\mathbb{B}(o))),\quad (p,z)\to \exp(p,z):=\exp_p(z\xi)
$$
is a diffeomorphism.  Then,  the supremum of the norm of the mean curvature vector field of $\Sigma$ satisfies 
$$
\sup_\Sigma\Vert \vec H\Vert\geq h_{\kappa}^n(R)
$$
where $h_{\kappa}^n(R)$ is the norm of the mean curvature of the generalized cylinder $\partial B^{\mathbb{M}^n(\kappa)}_R\times \mathbb{R}$ in $\mathbb{M}^n(\kappa)\times \mathbb{R}$.
\end{theorem}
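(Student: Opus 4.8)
The plan is to run a maximum-principle argument for the ambient \emph{base-distance} function and to exploit that, under properness, its supremum is realized at a genuine interior critical point, where the unit normal of $\Sigma$ is forced to be \emph{horizontal}; this horizontality is precisely what removes the $\tau_\Sigma$ correction of Theorem \ref{maintheo}.

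First I would introduce $F:=\rho\circ\pi$ on $\pi^{-1}(B_R^{\mathbb B}(o))$, where $\rho=d_{\mathbb B}(o,\cdot)$. Since $\pi$ is a Riemannian submersion and $F$ is constant along fibers, $\nabla_M F$ is the horizontal lift of $\nabla_{\mathbb B}\rho$; in particular $\nabla_M F$ is horizontal and of unit length on $\pi^{-1}(B_R\setminus\{o\})$. Because $\xi$ is a unit-length Killing field one has $\nabla_\xi\xi=0$, so the fibers are geodesics and the submersion has minimal fibers; combined with the O'Neill relation for basic functions this yields $\Delta_M F=(\Delta_{\mathbb B}\rho)\circ\pi$ and, for horizontal $X,Y$, $\mathrm{Hess}_M F(X,Y)=\mathrm{Hess}_{\mathbb B}\rho(d\pi X,d\pi Y)$, while $\mathrm{Hess}_M F(\xi,\xi)=0$. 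The hypotheses $\mathrm{sec}_{\mathbb B}\le\kappa$ and $R<\min\{\mathrm{inj}(o),\pi/2\sqrt\kappa\}$ let me invoke the Laplacian comparison to obtain $\Delta_M F\ge (n-1)\,C_\kappa(\rho\circ\pi)$ on the whole solid cylinder. Recording the classical trace formula for an isometric immersion,
\[\Delta_\Sigma(F\circ\varphi)=\Delta_M F-\mathrm{Hess}_M F(\nu,\nu)+n\langle\nabla_M F,\vec H\rangle,\]
with $\nu$ a local unit normal, gives the identity I evaluate at the maximum.

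Next I would locate the maximum. Put $R_0:=\sup_\Sigma(F\circ\varphi)\le R$. Using the section $s$ and the normal exponential diffeomorphism I define the fiber coordinate $z:\pi^{-1}(B_R^{\mathbb B}(o))\to\mathbb R$, so that $\pi^{-1}(\overline{B_R^{\mathbb B}(o)})\cong\overline{B_R^{\mathbb B}(o)}\times\mathbb R$ and the slabs $\Omega_T:=\varphi^{-1}(\{|z\circ\varphi|\le T\})$ are compact by properness. Since the base projection is bounded, any divergent sequence in $\Sigma$ must have $|z\circ\varphi|\to\infty$; hence, unless $R_0$ is already attained at an interior point, a maximizing sequence escapes only in the fiber direction. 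Here I use that the flow of $\xi$ consists of isometries preserving $\pi$ (hence preserving $F$ and the solid cylinder) and shifting $z$ by the flow parameter: recentering the escaping maximizers by these vertical translations produces a sequence of isometric immersions with the same mean-curvature function and with the maximizing points at height $z=0$, from which I extract a limit immersion attaining $R_0$ at an interior point. Either way I obtain interior critical points $p_T$ with $F(\varphi(p_T))=R_T\to R_0$, $\nabla_\Sigma(F\circ\varphi)(p_T)=0$ and $\Delta_\Sigma(F\circ\varphi)(p_T)\le 0$.

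At such a point $\nabla_\Sigma(F\circ\varphi)=0$ forces $\nabla_M F$ to be normal to $\Sigma$; since $\nabla_M F$ is horizontal and unit, $\nu=\pm\nabla_M F$ is horizontal, so $\langle\nu,\xi\rangle=0$ and the mixed and vertical Hessian terms vanish, leaving $\mathrm{Hess}_M F(\nu,\nu)=\mathrm{Hess}_{\mathbb B}\rho(\pm\nabla\rho,\pm\nabla\rho)=0$. Substituting into the trace identity gives $0\ge (n-1)C_\kappa(R_T)-n\|\vec H(p_T)\|$, that is $\|\vec H(p_T)\|\ge\frac{n-1}{n}C_\kappa(R_T)$; letting $T\to\infty$ and using $R_T\to R_0\le R$ with the monotonicity of $C_\kappa$ yields $\sup_\Sigma\|\vec H\|\ge\frac{n-1}{n}C_\kappa(R)=h_\kappa^n(R)$. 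The main obstacle is the recentering/limit step: guaranteeing that the supremum $R_0$ is realized at a true interior critical point rather than lost to the noncompact fiber direction, which requires some uniform control to extract the limit immersion. This is exactly where properness and the product structure furnished by the normal exponential diffeomorphism enter, and it is what allows one to work with the \emph{exact} vanishing of $\nabla_\Sigma(F\circ\varphi)$—forcing $\nu$ horizontal—rather than the merely asymptotic gradient behaviour available under stochastic completeness; this is why the $-\tau_\Sigma/n$ term of Theorem \ref{maintheo} is absent here.
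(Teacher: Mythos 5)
Your mechanism for removing the $\tau_\Sigma/n$ correction---vanishing of the gradient of the lifted base-distance function forces the unit normal $\nu$ to be horizontal, so the vertical term drops out---is the right one, and your preparatory computations (trace formula, Hessian of a lifted basic function, Laplacian comparison under ${\rm sec}\le\kappa$ with $R<\min\{{\rm inj}(o),\pi/2\sqrt{\kappa}\}$) are sound. But the step you yourself flag as the ``main obstacle'' is a genuine gap, not a technicality. Properness does not let you realize $R_0=\sup_\Sigma(\rho\circ\pi\circ\varphi)$ at an interior critical point: a maximizing sequence can perfectly well escape along the fiber direction, and your proposed remedy---recentering by the vertical flow $\Phi_{-t_k}$ of $\xi$ and ``extracting a limit immersion''---is not available under the stated hypotheses. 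To extract a smooth limit of the recentered immersions $\Phi_{-t_k}\circ\varphi$ near the maximizing points one needs uniform local control of the \emph{full second fundamental form} (plus local area or graphical bounds); the hypotheses control at best the mean curvature (and only after the harmless reduction $\sup_\Sigma\Vert\vec H\Vert<\infty$), and there is no compactness theorem for immersions with merely bounded mean curvature. Without that limit you never obtain the interior critical point on which your entire endgame rests. A side remark, but one touching the heart of the matter: stochastic completeness gives \emph{no} gradient control at all (Theorem \ref{teopigola} has no gradient condition---that is precisely why Theorem \ref{maintheo} retains the $\tau_\Sigma/n$ term), so the dichotomy you draw between ``exact vanishing under properness'' and ``asymptotic gradient behaviour under stochastic completeness'' is not the correct one.

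The paper closes exactly this gap by never asking for an attained maximum. It proves (Proposition \ref{novaprop5.2}) that properness, the bounded projection, and the normal exponential diffeomorphism imply the full \emph{Omori--Yau} maximum principle on $\Sigma$, assuming $\sup_\Sigma\Vert\vec H\Vert<\infty$ (otherwise there is nothing to prove). The key is your own fiber coordinate $z$: one shows $\nabla z=\xi$, so $u=z\circ\varphi$ satisfies $\Vert\nabla u\Vert\le 1$ and $\triangle_\Sigma u=n\langle\xi,\vec H\rangle$, which is bounded; since $u\to\infty$ at infinity on $\Sigma$ (properness plus bounded projection), the Al\'ias--Mastrolia--Rigoli criterion (Theorem \ref{AliasTeo}) applies. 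Then, applying the Omori--Yau principle to $f=F_k\circ r_o\circ\pi\circ\varphi$, the condition $\Vert\nabla f(x_i)\Vert<1/i$ is what kills the vertical correction: that term appears as $\langle\nabla f,\tfrac{2}{n}\langle\nu,\xi\rangle\nabla\xi(\nu_H)\rangle$, hence is $O(\Vert\nabla f\Vert)$, and it vanishes along the Omori--Yau sequence---the asymptotic surrogate for the exact horizontality of $\nu$ you were after. If you replace your recentering/limit step by this argument (prove the Omori--Yau principle via $z\circ\varphi$, then run your comparison inequality along the Omori--Yau sequence), your proof closes; as written, it does not.
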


In the case of $\mathbb{E}(\kappa,\tau)$ spaces with $\kappa\leq 0$ we can use the model for the $\mathbb{E}(\kappa,\tau)$ as  (see \cite{Manzano2017}) the space
$$
\mathbb{E}(\kappa,\tau)=\left\{(x,y,z)\in \mathbb{R}^3\,:\, 1+\frac{\kappa}{4}(x^2+y^2)>0 \right\}
$$
endowed with the Riemannian metric such that the following three vector fields
$$
\begin{aligned}
&E_1=\left[1+\frac{\kappa}{4}(x^2+y^2)\right]\frac{\partial}{\partial x}-\tau y\frac{\partial}{\partial z},\\& E_2=\left[1+\frac{\kappa}{4}(x^2+y^2)\right]\frac{\partial}{\partial y}+\tau x\frac{\partial}{\partial z},\\ &E_3=\frac{\partial}{\partial z}
\end{aligned}
$$
constitutes an orthonormal basis in each tangent space. Observe that 
$\pi(x,y,z)\to (x,y)$ is a Riemannian submersion from $\mathbb{E}(\kappa,\tau)$ to $\mathbb{M}^2(\kappa)$ whose fibers are the integral curves of the unit-length Killing vector field $E_3$. Moreover,  
$
s(x,y)\to (x,y,0)
$
constitutes a smooth global section from $\mathbb{M}^2(\kappa)$ to $\mathbb{E}(\kappa,\tau)$. The normal exponential map satisfies
$$
\exp((x,y,0),t)=\exp_{(x,y,0)}(t E_3)=(x,y,t).
$$
In this spaces the hypothesis of theorem \ref{teo2} are therefore fulfilled and hence we can state
\begin{corollary}Let $\varphi:\Sigma\to \mathbb{E}(\kappa,\tau)$ be a proper isometric immersion from the complete and non-compact surface to  $\mathbb{E}(\kappa,\tau)$ with $\kappa\leq 0$. Suppose that $\pi(\varphi(\Sigma))$ is contained in some ball $B_R^{\mathbb{M}^2(\kappa)}(o)$ of radius $R$ in $\mathbb{M}^2(\kappa)$, then the supremum of the norm of the mean curvature vector field of $\Sigma$ satisfies
$$
\sup_\Sigma\Vert \vec{H}\Vert\geq h^2_\kappa(R) 
$$
where $h_{\kappa}^2(R)$ is the norm of the mean curvature of the generalized cylinder $\partial B^{\mathbb{M}^2(\kappa)}_R\times \mathbb{R}$ in $\mathbb{M}^2(\kappa)\times \mathbb{R}$.
\end{corollary}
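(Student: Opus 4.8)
The plan is to deduce the corollary as a direct application of Theorem \ref{teo2}, so the whole task reduces to verifying that the concrete model of $\mathbb{E}(\kappa,\tau)$ with $\kappa\le 0$ described above satisfies each of that theorem's hypotheses. I would set $M=\mathbb{E}(\kappa,\tau)$, $\mathbb{B}=\mathbb{M}^2(\kappa)$, and $n=2$, and check the requirements one at a time. The map $\pi(x,y,z)=(x,y)$ is the Riemannian submersion onto $\mathbb{M}^2(\kappa)$, and $E_3=\partial/\partial z$ is a unit-length vector field (being a member of the given orthonormal frame) whose integral curves are exactly the fibers, so $M$ admits a Killing submersion with unit-length Killing field. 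The assumption $\pi(\varphi(\Sigma))\subset B_R^{\mathbb{M}^2(\kappa)}(o)$ is precisely $\varphi(\Sigma)\subset\pi^{-1}(B_R^{\mathbb{B}}(o))$. Since $\mathbb{B}=\mathbb{M}^2(\kappa)$ has constant sectional curvature $\kappa$, the bound $\mathrm{sec}\le\kappa$ holds with equality; and for $\kappa\le 0$ the base is a Cartan--Hadamard surface (or the Euclidean plane when $\kappa=0$), so $\mathrm{inj}(o)=+\infty$ and $\pi/2\sqrt{\kappa}$ is replaced by $+\infty$, whence the requirement $R<\min\{\mathrm{inj}(o),\pi/2\sqrt{\kappa}\}=+\infty$ is automatic.

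The only point demanding genuine verification is the existence of a smooth section over $\overline{B_R^{\mathbb{B}}(o)}$ whose normal exponential map is a diffeomorphism. Here I would take the global section $s(x,y)=(x,y,0)$, which is smooth, and compute its normal exponential map. Because $E_3=\partial/\partial z$ is a unit-length Killing field, its integral curves are geodesics, so the geodesic issuing from $(x,y,0)$ in the direction $E_3$ is $t\mapsto(x,y,t)$, giving $\exp((x,y,0),t)=(x,y,t)$, exactly as recorded above. This map is visibly a smooth bijection from $s(B_R^{\mathbb{B}}(o))\times\mathbb{R}$ onto $\pi^{-1}(B_R^{\mathbb{B}}(o))=\{(x,y,t):(x,y)\in B_R^{\mathbb{B}}(o)\}$, with smooth inverse $(x,y,t)\mapsto((x,y,0),t)$, hence a diffeomorphism.

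With every hypothesis verified and $\varphi$ proper by assumption, Theorem \ref{teo2} applies with $n=2$ and yields $\sup_\Sigma\Vert\vec H\Vert\ge h^2_\kappa(R)$, which is the claim. I do not expect a serious obstacle, since all the analytic content is carried by Theorem \ref{teo2} and the corollary merely records that the model spaces $\mathbb{E}(\kappa,\tau)$ with $\kappa\le 0$ realize its hypotheses. The most delicate checkpoint is confirming that the fibers of $\pi$ are geodesics, so that the normal exponential map has the clean explicit form $(x,y,0)\mapsto(x,y,t)$; this is where I would be most careful, although it follows immediately from $E_3$ being a unit-length Killing field.
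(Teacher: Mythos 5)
Your proposal is correct and follows essentially the same route as the paper: both verify that the explicit model of $\mathbb{E}(\kappa,\tau)$ with the projection $\pi(x,y,z)=(x,y)$, the global section $s(x,y)=(x,y,0)$, and the normal exponential map $\exp((x,y,0),t)=(x,y,t)$ fulfills every hypothesis of Theorem \ref{teo2}, and then invoke that theorem with $n=2$. Your additional observations (that $R<\min\{\mathrm{inj}(o),\pi/2\sqrt{\kappa}\}$ is automatic for $\kappa\le 0$, and that the fibers are geodesics because $\xi$ is a unit-length Killing field, i.e.\ $\nabla_\xi\xi=0$) are exactly the details the paper leaves implicit.
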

\begin{remark} The $\mathbb{E}(\kappa,\tau)$ spaces includes for $\tau=0$, $\mathbb{E}(\kappa<0,\tau=0)=\mathbb{H}^2(\kappa)\times \mathbb{R}$ and $\mathbb{E}(\kappa=0,\tau=0)=\mathbb{R}^2\times\mathbb{R}=\mathbb{R}^3$. In this cases the above corollary is a direct application of \cite{Alias2009}. For the case of $\tau\neq 0$ we have the Heisengerb group ${\rm Nil}_3=\mathbb{E}(\kappa=0,\tau\neq 0)$ for $\kappa=0$ and in the case of negative curvature $\kappa<0$ we can assume up to scaling that we are in 
universal cover of $PSL_2(\mathbb{R})$ , namely $\widetilde{PSL}_2(\mathbb{R})=\mathbb{E}(\kappa=-1,\tau\neq 0)$.  By using the above corollary,  any properly immersed non-compact surface $\varphi:\Sigma\to{\rm Nil_3}$ with bounded projection $\pi(\varphi(\Sigma))\subset B_R^{\mathbb{R}^2}(o)\subset\mathbb{R}^2$, has bounded from below the supremum of the norm of the mean curvature vector field by
\begin{equation}\label{novaeq1}
\sup_\Sigma\Vert \vec{H}\Vert\geq \frac{1}{2R}.
\end{equation}
In the case of negative curvature if we have a complete and non-compact surface $\Sigma$ properly immersed in $\widetilde{PSL}_2(\mathbb{R})$ with bounded projection $\pi(\varphi(\Sigma))\subset B_R^{\mathbb{H}^2(-1)}\subset \mathbb{H}^2(-1)$, then the surface has bounded from below the norm of the mean curvature vector field by
\begin{equation}\label{novaeq2}
\sup_\Sigma\Vert \vec{H}\Vert\geq \frac{1}{2}{\rm cotanh}(R).
\end{equation}
Observe that inequalities (\ref{novaeq1}) and (\ref{novaeq2}) are optimal because the right side coincides with the norm of the mean curvature of the cylinders $\pi^{-1}(\partial B_R^{\mathbb{R}^2})\subset {\rm Nil}_3$ and $\pi^{-1}(\partial B_R^{\mathbb{H}^2(-1)})\subset \widetilde{PSL}_2(\mathbb{R})$  respectively.
\end{remark}
\section{Preliminaries}\label{prelimsec}

\subsection{Killing Submersions}
Let $M$ and $\mathbb{B}$ two manifolds. A \emph{ submersion} $\pi:M\to \mathbb{B}$ is a mapping of $M$ onto $\mathbb{B}$ such that its derivative $d\pi_p:T_pM\to T_{\pi(p)}\mathbb{B}$ has maximal rank (it is onto) for any $p\in M$. Then, the distribution $p\to \mathcal{V}(p)={\rm ker}(d\pi_p)$ called the vertical distribution is an involutive  distribution and hence  $\pi^{-1}(x)$ is a submanifold of $M$ of dimension ${\rm dim}(M)-{\rm dim}(\mathbb{B})$ for any $x\in\mathbb{B}$. The submanifolds $\pi^{-1}(x)$ are  called the \emph{fibers}. A vector field $X\in\mathfrak{X}(M)$ is called vertical if it belongs to $\mathcal{V}$, namely, if $X(p)\in \mathcal{V}(p)$ for any $p\in M$.

If $(M,g)$ is moreover a Riemannian manifold, an other distribution called the horizontal distribution can be constructed as $p\to \mathcal{H}(p)=({\rm ker}(d\pi_p))^\perp$. Likewise, a vector field $X\in \mathfrak{X}(M)$ is called horizontal if it belongs to $\mathcal{H}$. Then, for any $p\in M$ we can decompose the tangent space $T_pM$ as
$$
T_pM=\mathcal{H}(p)\oplus\mathcal{V}(p).
$$
A \emph{Riemannian submersion} $\pi:(M, g_M)\to (\mathbb{B},g_{\mathbb{B}})$ is a submersion  such that $d\pi$ preserves the lengths of horizontal vectors. Namely $d\pi_p$ is a local isometry from $\mathcal{H}(p)$ to $T_{\pi(p)}\mathbb{B}$.

A Riemannian submersion $\pi:M\to\mathbb{B}$ is a \emph{Killing submersion} if the fibers $\pi^{-1}(x)$ for any $x\in \mathbb{B}$ are integral curves of a Killing vector field $\xi\in \mathfrak{X}(M)$. Along this paper is assumed that the Killing vector field $\xi$ is an unit-length vector field ($\Vert \xi\Vert=1$). See \cite{Lerma2017} for the general discussion of a Killing submersion with a Killing vector field of non-constant norm. 

Recall that a vector field $\xi\in \mathfrak{X}(M)$ is a \emph{Killing vector field} of $(M,g)$ (see \cite{Oneill}) if its  Lie derivative of the metric tensor vanishes identically, $\mathcal{L}_\xi(g)=0$. If $\xi$ is a Killing vector field, the metric tensor does not change under the flow of $\xi$ and $\xi$ generates local isometries.

The following proposition of a Killing vector field will be used along this paper in order to characterize a Killing vector field

\begin{proposition}[See \cite{Oneill}]\label{killingchar}
Let $(M,g)$ be a Riemannian manifold. Then, the following conditions are equivalents for a vector field $\xi\in\mathfrak{X}(M)$
\begin{enumerate}
\item $\xi$ is Killing; that is, $\mathcal{L}_\xi g=0$.
\item $\nabla \xi$ is skew-adjoint relative to $g$; that is, $\langle \nabla_V\xi,W\rangle= -\langle \nabla_W\xi,X\rangle$ for all $V,W\in\mathfrak{X}(M)$.
\end{enumerate}
\end{proposition}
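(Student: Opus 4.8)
The plan is to reduce the equivalence to a single tensorial computation expressing the Lie derivative of the metric in terms of the Levi-Civita connection $\nabla$. First I would recall that, since $\mathcal{L}_\xi$ is a derivation commuting with contractions and $\mathcal{L}_\xi V=[\xi,V]$, for any $V,W\in\mathfrak{X}(M)$ one has
$$
(\mathcal{L}_\xi g)(V,W)=\xi\langle V,W\rangle-\langle[\xi,V],W\rangle-\langle V,[\xi,W]\rangle.
$$
The core step is then to rewrite the right-hand side using the two defining properties of the Levi-Civita connection. By metric compatibility, $\xi\langle V,W\rangle=\langle\nabla_\xi V,W\rangle+\langle V,\nabla_\xi W\rangle$, and by the torsion-free identity $[\xi,V]=\nabla_\xi V-\nabla_V\xi$, together with the analogous expression for $[\xi,W]$. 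Substituting these and cancelling the $\nabla_\xi V$ and $\nabla_\xi W$ contributions leaves
$$
(\mathcal{L}_\xi g)(V,W)=\langle\nabla_V\xi,W\rangle+\langle\nabla_W\xi,V\rangle.
$$

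With this identity in hand the proposition is immediate. The right-hand side is exactly twice the symmetric part of the bilinear form $(V,W)\mapsto\langle\nabla_V\xi,W\rangle$ associated with the $(1,1)$-tensor $\nabla\xi$. Hence $\mathcal{L}_\xi g=0$ holds identically if and only if $\langle\nabla_V\xi,W\rangle+\langle\nabla_W\xi,V\rangle=0$ for all $V,W\in\mathfrak{X}(M)$, which is precisely the statement that $\nabla\xi$ is skew-adjoint relative to $g$. This yields both implications (1)$\Rightarrow$(2) and (2)$\Rightarrow$(1) simultaneously, so no separate argument for each direction is needed.

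I do not anticipate a genuine obstacle here, since everything follows from a direct computation once the key identity is established. The only point requiring care is the bookkeeping in that identity, where one must correctly invoke \emph{both} metric compatibility and the vanishing of torsion, and track the signs so that the $\nabla_\xi$ terms cancel and the surviving terms combine into the symmetric expression above; a sign slip there would spuriously reverse the skew-adjointness conclusion.
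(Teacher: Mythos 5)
Your proof is correct: the identity $(\mathcal{L}_\xi g)(V,W)=\langle\nabla_V\xi,W\rangle+\langle\nabla_W\xi,V\rangle$, obtained exactly as you describe from metric compatibility and the torsion-free property, immediately gives the equivalence of (1) and (2). The paper itself gives no proof of this proposition, quoting it from O'Neill's \emph{Semi-Riemannian Geometry}, and your computation is precisely the standard argument found there, so there is nothing to contrast.
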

 
If $\pi:M\to\mathbb{B}$ is a Killing submersion,
for any $p\in M$, by using the vertical vector field $\xi$,  the following linear map $\nabla \xi:T_pM\to T_pM,\quad v\to\nabla\xi(v)=\nabla_v\xi$ can be defined. Since $\xi$ is a unit-length Killing vector field, by proposition \ref{killingchar}
\begin{equation}\label{eq1}
\langle \nabla_v\xi,w\rangle=-\langle \nabla_w\xi,v\rangle
\end{equation}
for any $v,w\in T_pM$. This implies that $\pi^{-1}(x)$ is geodesic in $M$ because $\pi^{-1}(x)$ is the integral curve of $\xi$, and by using that $\Vert \xi\Vert=1$ and equality (\ref{eq1}), we conclude
\begin{equation}\label{eq2}
\langle \nabla_\xi\xi,v\rangle=-\langle \nabla_v\xi,\xi\rangle=-\frac{1}{2}v\langle\xi,\xi\rangle=0,
\end{equation}
for any $v\in T_pM$. Therefore, $\nabla_\xi\xi=0$, and as we have stated $\pi^{-1}(x)$ is a geodesic in $M$. Moreover, from equation (\ref{eq2}) we deduce that $\nabla_v\xi$ is perpendicular to $\xi$, and hence horizontal. The restriction of $\nabla\xi$ to $\mathcal{H}(p)$ induces therefore a linear map $\nabla \xi(p):\mathcal{H}(p)\to\mathcal{H}(p)$.

In the following proposition it is summarized the properties of the $(1,1)$-tensor field $\nabla\xi$ and of $\tau=\left(\frac{\Vert \nabla \xi\Vert^2}{2}\right)^{\frac{1}{2}}$ that are relevant for the present paper
\begin{proposition}\label{propunua}
Let $\pi:M\to \mathbb{B}$ be a Killing submersion with unit-length Killing vector field. Then
\begin{enumerate}
\item Given a point $p\in M$ and an horizontal vector $v\in T_pM$, the sectional curvature ${\rm sec}(v\wedge \xi(p))$ of the plane spaned by $\xi(p)$ and $v$ is bounded by 
$$
{\rm sec}(v\wedge \xi(p))\leq \tau^2(p)
$$
with equality if ${\rm dim}(\mathbb{B})=2$.
\item Given a point $p\in M$ and for any horizontal vector $v\in T_pM$ $$\Vert \nabla_v\xi\Vert^2\leq \tau^2\Vert v\Vert^2$$
\item The function $\tau:M\to \mathbb{R}$ is a basic function, \emph{i.e.}, it is fiber-independent, namely, if $\pi(x)=\pi(y)$ then $\tau(x)=\tau(y)$. 
\end{enumerate}
\end{proposition}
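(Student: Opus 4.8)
Throughout I would write $A:=\nabla\xi$ and use, from Proposition \ref{killingchar} and equation (\ref{eq1}), that $A$ is skew-adjoint, while equation (\ref{eq2}) gives $A\xi=\nabla_\xi\xi=0$ and shows that $A$ preserves $\mathcal{H}(p)$; thus $\tau^2(p)=\tfrac12\Vert A\Vert^2$ is half the squared Hilbert--Schmidt norm of the skew-adjoint endomorphism $A$ of $\mathcal{H}(p)$, and the three assertions decouple into a curvature identity, a linear-algebra estimate, and a symmetry argument. For the first I would establish the pointwise identity ${\rm sec}(v\wedge\xi(p))=\Vert\nabla_v\xi\Vert^2/\Vert v\Vert^2$ for horizontal $v$. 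Extending $v,\xi$ to vector fields (the result being tensorial), I would expand $R(v,\xi)\xi=\nabla_v\nabla_\xi\xi-\nabla_\xi\nabla_v\xi-\nabla_{[v,\xi]}\xi$, insert $\nabla_\xi\xi=0$, and rewrite everything through $A$: from $[v,\xi]=Av-\nabla_\xi v$ and $\nabla_\xi(Av)=(\nabla_\xi A)v+A\nabla_\xi v$ the terms involving $\nabla_\xi v$ cancel, leaving $R(v,\xi)\xi=-(\nabla_\xi A)v-A^2v$. Pairing with $v$, the first summand drops because the covariant derivative of a skew-adjoint tensor is again skew-adjoint (so $\langle(\nabla_\xi A)v,v\rangle=0$), while $-\langle A^2v,v\rangle=\langle Av,Av\rangle=\Vert\nabla_v\xi\Vert^2$; this yields the identity and, incidentally, the non-negativity of the curvature.

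Next I would treat the linear algebra. A skew-adjoint operator $A$ on $\mathcal{H}(p)$ has spectrum $\{\pm i\lambda_1,\dots,\pm i\lambda_k\}$ together with a kernel, so its singular values are $\lambda_1,\lambda_1,\dots,\lambda_k,\lambda_k$; hence $\Vert A\Vert^2=2\sum_j\lambda_j^2$ while its operator norm is $\max_j\lambda_j$. Therefore $\Vert\nabla_v\xi\Vert^2=\Vert Av\Vert^2\le(\max_j\lambda_j^2)\Vert v\Vert^2\le(\sum_j\lambda_j^2)\Vert v\Vert^2=\tau^2\Vert v\Vert^2$, which is assertion (2); combined with the identity above it gives ${\rm sec}(v\wedge\xi)\le\tau^2$, assertion (1). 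For the equality when ${\rm dim}(\mathbb{B})=2$ I would observe that then $\mathcal{H}(p)$ is two-dimensional, so there is a single pair $\pm i\lambda$ and $A$ acts as $\lambda$ times a rotation; thus $\Vert Av\Vert=\lambda\Vert v\Vert$ for all $v$ and $\tau=\lambda$, forcing equality in both inequalities.

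For assertion (3) I would argue by symmetry rather than by computation: the flow $\phi_t$ of the Killing field $\xi$ consists of local isometries and fixes its own generator ($\mathcal{L}_\xi\xi=0$), hence preserves the Levi-Civita connection and therefore the tensor $\nabla\xi$ together with its Hilbert--Schmidt norm. Consequently $\tau\circ\phi_t=\tau$, and differentiating at $t=0$ gives $\xi(\tau)=0$; since the fibers are precisely the integral curves of $\xi$, the function $\tau$ is constant along fibers and so descends to $\mathbb{B}$, i.e.\ is basic.

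I expect the curvature identity of the first paragraph to be the only genuine obstacle: the remaining assertions reduce to standard facts about skew-adjoint operators and about flows of Killing fields. The delicate points there are the correct bookkeeping of the bracket and connection terms in the expansion of $R(v,\xi)\xi$ and the justification that $\langle(\nabla_\xi A)v,v\rangle$ vanishes; the sign convention for the curvature tensor must be fixed at the outset so that ${\rm sec}(v\wedge\xi)=\langle R(v,\xi)\xi,v\rangle$ for an orthonormal pair.
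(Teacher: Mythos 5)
Your proof is correct, and it takes a genuinely different route from the paper's in all three items. For the curvature identity, the paper does not work with the tensor $A=\nabla\xi$ abstractly: it lifts a geodesic vector field $\overline X$ from the base, so that the horizontal lift $X$ satisfies $\nabla_XX=0$, and then computes $\langle R(X,\xi)X,\xi\rangle$ by hand through the brackets $[X,\xi]$; your computation $R(v,\xi)\xi=-(\nabla_\xi A)v-A^2v$, using only tensoriality of $R$, the Leibniz rule, and the fact that $\nabla_\xi A$ inherits skew-adjointness from $A$ by metric compatibility, needs no special extension and makes the non-negativity of these curvatures transparent. For item (2), the paper argues in components, symmetrizing $\langle\nabla_{E_i}\xi,E_j\rangle^2$ in $i,j$ and using $(v^i)^2+(v^j)^2\le\Vert v\Vert^2$; your spectral argument ($\Vert A\Vert_{\rm op}^2=\max_j\lambda_j^2\le\sum_j\lambda_j^2=\tau^2$, because the nonzero singular values of a skew-adjoint operator come in pairs) buys more than elegance: the first line of (\ref{eq2.4}) in the paper silently drops the cross terms $j\neq k$ in the expansion $\langle\nabla_v\xi,E_i\rangle^2=\sum_{j,k}v^jv^k\langle\nabla_{E_j}\xi,E_i\rangle\langle\nabla_{E_k}\xi,E_i\rangle$, so as written it is valid only after adapting the basis to $v$ (say $E_1=v/\Vert v\Vert$), a slip your argument avoids entirely; the equality case ${\rm dim}(\mathbb{B})=2$ is handled essentially the same way in both (a skew-adjoint operator on a plane is a multiple of a rotation). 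For item (3), the paper differentiates along the fiber, showing $\xi\left(\langle\nabla_{E_i}\xi,E_j\rangle\right)=0$ term by term in a suitably propagated frame, whereas you get $\tau\circ\phi_t=\tau$ for free from the fact that the flow of $\xi$ consists of local isometries fixing $\xi$, hence conjugating $\nabla\xi$ by linear isometries that also preserve $\mathcal{H}$; this is computation-free and avoids any care about how the frame is chosen along the fiber (and you do not even need to differentiate at $t=0$, since invariance under the flow already gives constancy along fibers). In short, both proofs are sound, but yours systematically replaces the paper's frame and extension computations by structural facts (tensoriality, spectral theory of skew-adjoint maps, isometry invariance), and in item (2) it is actually tighter than the paper's own derivation.
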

\begin{proof}
Given a point $p\in M$ and an horizontal vector $v\in \mathcal{H}(p)$ with unit-length, $\Vert v\Vert=1$, in order to obtain the sectional curvature ${\rm sec}(v\wedge \xi)$ let us consider a vector field $\overline X\in\mathfrak{X}(\mathbb{B})$ defined in a neighborhood $U\ni\pi(p)$, such that $\overline X(\pi(p))=d\pi(v)$ and with vanishing covariant derivative $\nabla^\mathbb{B}_{\overline X}\overline X=0$  in $\mathbb{B}$, \emph{i.e.}, a geodesic vector field.   Then the lift $X\in \mathfrak{X}(M)$ of $\overline X$ defined in $\pi^{-1}(U) \ni p$ satisfies
\begin{equation}
\left\{
\begin{array}{rcl}
d\pi(X)&=&d\pi(X^H)=\overline X\\
d\pi((\nabla_XX)^H)&=&\nabla^B_{\overline X}\overline X=0\\
\nabla_XX&=&(\nabla_XX)^H+\langle \nabla_XX,\xi\rangle\xi=-\langle X,\nabla_X\xi\rangle=0.
\end{array}\right.
\end{equation}
Where here and in what follows the superscript $H$ denotes the horizontal part of a vector.  Then,
$$
\begin{aligned}
{\rm sec}(v\wedge \xi)=&{\rm sec}(X\wedge \xi)=\langle R(X,\xi)X,\xi\rangle=\langle \nabla_\xi\nabla_XX-\nabla_X\nabla_\xi X+\nabla_{[X,\xi]}X,\xi\rangle\\
=&\langle-\nabla_X\nabla_\xi X+\nabla_{[X,\xi]}X,\xi\rangle=\langle-\nabla_X\left([\xi, X]+\nabla_X\xi\right)+\nabla_{[X,\xi]}X,\xi\rangle\\
=&\langle\nabla_X\left([X,\xi]-\nabla_X\xi\right)+\nabla_{[X,\xi]}X,\xi\rangle
\end{aligned}
$$
In order to simplify the expression let us define the following vector fields $Y:=\nabla_X\xi$ and $Z:=[X,\xi]$.  Observe that both $X,Y$ are horizontal vector fields. Since $Y=\nabla\xi(X)$ and
$$
\langle Z,\xi\rangle=\langle \nabla_X\xi-\nabla_\xi X,\xi\rangle=\langle \nabla_X\xi,\xi\rangle-\langle\nabla_\xi X,\xi\rangle=\frac{1}{2}X\langle\xi,\xi\rangle=0
$$
Therefore,
$$
\begin{aligned}
{\rm sec}(v\wedge \xi)=&\langle\nabla_X\left(Z-Y\right)+\nabla_{Z}X,\xi\rangle=\langle\nabla_XZ+\nabla_Z X,\xi\rangle-\langle\nabla_XY,\xi\rangle\\
=&\Vert Y\Vert^2=\Vert \nabla_X\xi\Vert^2=\Vert \nabla_v\xi\Vert^2
\end{aligned}
$$
where we have used that
$
\langle \nabla_XZ,\xi\rangle=-\langle Z,\nabla_X\xi\rangle=\langle X,\nabla_Z\xi\rangle=-\langle \nabla_ZX,\xi\rangle
$
and
$
\langle\nabla_XY,\xi\rangle=-\langle Y,\nabla_X\xi\rangle=-\Vert Y\Vert^2.
$

In order to obtain item (1) and (2) of the proposition we only need to relate $\Vert \nabla \xi(v)\Vert^2$ with $\Vert \xi\Vert^2$. 
When we focus on $p\in M$ and consider an orthonormal basis $\{E_i\}_{i=1}^n$ of $\mathcal{H}(p)$, for any $v\in \mathcal{H}(p)$, $v=\sum_iv^iE_i$, 
\begin{equation}\label{eq2.4}
\begin{aligned}
\Vert \nabla \xi(v)\Vert^2=&\Vert\nabla_v\xi\Vert^2=\sum_{i=1}^n\langle \nabla_v\xi,E_i\rangle^2=\sum_{i=1}^n\sum_{j=1}^n(v^j)^2\langle \nabla_{E_j}\xi,E_i\rangle^2\\
=&\sum_{i=1}^n\sum_{j=1}^n(v^j)^2\left(\frac{\langle \nabla_{E_j}\xi,E_i\rangle^2+\langle \nabla_{E_i}\xi,E_j\rangle^2}{2}\right)\\
=&\sum_{i=1}^n\sum_{j=1}^n(v^j)^2\frac{\langle \nabla_{E_j}\xi,E_i\rangle^2}{2}+\sum_{i=1}^n\sum_{j=1}^n(v^j)^2\frac{\langle \nabla_{E_i}\xi,E_j\rangle^2}{2}\\
=&\sum_{i=1}^n\sum_{j=1}^n(v^j)^2\frac{\langle \nabla_{E_j}\xi,E_i\rangle^2}{2}+\sum_{j=1}^n\sum_{i=1}^n(v^i)^2\frac{\langle \nabla_{E_j}\xi,E_i\rangle^2}{2}\\
=&\sum_{i=1}^n\sum_{j=1}^n\frac{(v^j)^2+(v^i)^2}{2}\langle \nabla_{E_j}\xi,E_i\rangle^2\\
=&\sum_{i=1}^{n-1}\sum_{j>1}^n\left((v^j)^2+(v^i)^2\right)\langle \nabla_{E_j}\xi,E_i\rangle^2
\end{aligned}
\end{equation}
where we have used that $\langle \nabla_{E_i}\xi,E_j\rangle^2$ is symmetric in $i,j$ and  $\langle \nabla_{E_i}\xi,E_i\rangle^2=0$ because $\xi$ is a Killing vector field . We now, need to relate $\Vert \nabla_v\xi\Vert$ with the Hilbert-Schmidt norm $\Vert \nabla \xi\Vert$. Recall that for the linear map $\nabla \xi:\mathcal{H}(p)\to\mathcal{H}(p)$ the Hilbert-Schmidt norm is given by
\begin{equation}\label{eq2.5}
\begin{aligned}
\Vert\nabla \xi\Vert^2=&\sum_{i=1}^n\Vert \nabla \xi(E_i)\Vert^2=\sum_{i=1}^n\sum_{j=1}^n\langle \nabla_{E_i}\xi, E_j\rangle^2\\
=&2\sum_{i=1}^{n-1}\sum_{j>i}^n\langle \nabla_{E_i}\xi, E_j\rangle^2
\end{aligned}
\end{equation}
In the particular case when $n=2$, by using  inequalities (\ref{eq2.4}) and (\ref{eq2.5}),
$$
\begin{aligned}
\Vert \nabla_v\xi\Vert^2=&\left((v^1)^2+(v^2)^2\right)\langle\nabla_{E_1}\xi,E_2\rangle^2=\frac{\Vert \nabla\xi\Vert^2}{2}\Vert v\Vert^2 \\
=&\tau^2 \Vert v\Vert^2
\end{aligned}
$$ 
when $n>2$, taking into account that for any $i$ and $j$, $\left((v^i)^2+(v^j)^2\right)\leq \Vert v\Vert^2$,
$$
\begin{aligned}
\Vert \nabla_v\xi\Vert^2\leq& \Vert v\Vert^2 \sum_{i=1}^{n-1}\sum_{j>1}^n\langle \nabla_{E_j}\xi,E_i\rangle^2\\
=&\tau^2 \Vert v\Vert^2
\end{aligned}
$$
and item (2) of the proposition follows. By using ${\rm sec}(v\wedge \xi)=\Vert \nabla_v\xi\Vert^2$ with $\Vert v\Vert^2=1$ item (1) of the proposition follows as well.

Finally, we are going to prove that $\tau$ is an basic function. Given any point $p\in M$ with $\pi(p)=y$ let us consider the integral curve $\gamma_\xi:\mathbb{R}\to M$ of $\xi$ tangent to the fiber $\pi^{-1}(y)$ with $\gamma_\xi(0)=p$ (and $\dot\gamma_\xi(0)=\xi(p)$). It is sufficient to prove  that
$$
\left.\frac{d}{dt}\left(\tau^2\circ\gamma_\xi(t)\right)\right\vert_{t=0}=0
$$
To obtain that let us consider a sufficient small tubular neighborhood of $\gamma((-\epsilon,\epsilon))$ and the following orthonormal basis $\{\xi(\gamma(t)),E_1,\cdots,E_n\}$ at $\gamma(t)$,
($n={\rm dim}(\mathbb{B})$ and $\{E_i\}$ are horizontal vectors). Then
\begin{equation}
\begin{aligned}
\frac{d}{dt}\left(\tau^2\circ\gamma_\xi(t)\right)=&\xi(\tau^2)=\xi\left(\sum_{ij}^n\langle \nabla_{E_i}\xi,E_j\rangle^2\right)\\
=&2\sum_{ij}^n\langle \nabla_{E_i}\xi,E_j\rangle\cdot\xi\left(\langle \nabla_{E_i}\xi,E_j\rangle\right)
\end{aligned}
\end{equation}
But for any $i,j$ 
\begin{equation}
\begin{aligned}
\xi\left(\langle \nabla_{E_i}\xi,E_j\rangle\right)=&\langle \nabla_\xi\nabla_{E_i}\xi,E_j\rangle+\langle \nabla_{E_i}\xi,\nabla_\xi E_j\rangle\\
=& \langle \nabla_{\nabla_{E_i}\xi}\xi,E_j\rangle+\langle\nabla_{E_i}\xi,\nabla_{E_j}\xi\rangle\\
=&-\langle \nabla_{E_j}\xi,\nabla_{E_i}\xi\rangle+\langle\nabla_{E_i}\xi,\nabla_{E_j}\xi\rangle=0
\end{aligned}
\end{equation}
 
\end{proof}
\subsection{Hessian and Laplacian in immersions and submersions}
We are interested in the following setting
\begin{center}
	\begin{tikzcd}
		\Sigma \arrow{r}{\varphi} & M\arrow{r}{\pi}&\mathbb{B}
	\end{tikzcd}
\end{center}
with $\varphi$ an isometric immersion and $\pi$ a Killing submersion. Since in this paper we will assume that $\Sigma$ is stochastically complete, $\Sigma$ satisfies a weak maximum principle for the Laplacian of bounded functions $f:\Sigma\to\mathbb{R}$, see theorem \ref{teopigola}. Our strategy will be to make use of an specific function $\overline{f}: \mathbb{B}\to \mathbb{R}$ and to study the Laplacian of the function $\overline{f}\circ\pi\circ\varphi:\Sigma\to\mathbb{R}$.
In this section, we develop in proposition \ref{propdua} the required relation between $\triangle (\overline{f}\circ\pi\circ\varphi)$, the mean curvature of the immersion $\varphi$ and the bundle curvature $\tau$ of the submersion $\pi$.

Let $\varphi:\Sigma\to M$ be an isometric immersion. For any point $\varphi(p)\in M$ we can decompose the tangent space as $T_{\varphi(p)}M=d\varphi(T_p\Sigma)\oplus (d\varphi(T_p\Sigma))^\perp$.  Let us denote by $\nabla^M$ and $\nabla^\Sigma$  the Levi-Civita connection on $M$ and $\Sigma$. For any $p\in \Sigma$, $x,y\in T_p\Sigma$ and $Y\in \mathfrak{X}(\Sigma)$ an extension of $y$ to $\mathfrak{X}(\Sigma)$, the second fundamental form $II_p(x,y)$ is given by
$$
II_p(x,y)=\nabla^M_{d\varphi(x)}Z-d\varphi(\nabla^\Sigma_xY)
$$
being $Z$ any extension of $d\varphi(Y)$ to  $\mathfrak{X}(M)$.  Since,
$$
(\nabla^M_{d\varphi(x)}Z)^T=d\varphi(\nabla^\Sigma_xY)
$$
the second fundamental form $II_p(x,y)\in (d\varphi(T_p\Sigma)^\perp)$ and recall moreover that the mean curvature of the immersion $\varphi:\Sigma\to M$ in $p$ is defined by
$$
\vec{H}:=\frac{1}{{\rm dim}(\Sigma)}\sum_{i=1}^{{\rm dim}(\Sigma)}II_p(E_i,E_i)
$$
for any orthonormal basis $\{E_i\}$ of $T_p\Sigma$.

Let $f:M\to \mathbb{R}$ be a smooth function, the gradient of $f$ and the gradient of the restricted function $f\circ\varphi:\Sigma\to\mathbb{R}$ satisfy the following relation 
$$
\langle \nabla f\circ \varphi , v\rangle_\Sigma=d(f\circ\varphi)(v)=df(d\varphi(v))=\langle\nabla f,d\varphi(v)\rangle_M.
$$
The Hessian of the restriction $f\circ\varphi$ is given then by
\begin{equation}\label{novaeq10}
\begin{aligned}
{\rm Hess}_\Sigma f\circ\varphi(x,y)=&\langle \nabla^\Sigma_x\nabla f\circ\varphi,y\rangle_\Sigma=x\langle \nabla f\circ\varphi,y\rangle_\Sigma-\langle \nabla f\circ\varphi,\nabla_x^\Sigma Y\rangle_\Sigma\\
=&x\langle \nabla f\circ\varphi,y\rangle_\Sigma-\langle \nabla f,d\varphi(\nabla_x^\Sigma Y)\rangle_M\\
=&d\varphi(x)\langle \nabla f,d\varphi(y)\rangle_M-\langle \nabla f,\nabla^M_{d\varphi(x)}d\varphi(Y)-II_p(x,y)\rangle_M\\
=&{\rm Hess}_Mf(d\varphi(x),d\varphi(y))+\langle \nabla f,II_p(x,y)\rangle_M.
\end{aligned}
\end{equation}
If $\Sigma$ is an  hypersurface of $M$ (\emph{i.e.}, ${\rm dim}(\Sigma)=n$) there exists (at least locally) a vector field $\nu$ normal to $\Sigma$ and such that $(d\varphi(T_p\Sigma))^\perp={\rm span}\{\nu\}$. Given an orthonormal basis $\{E_i\}_{i=1}^n$ of $T_p\Sigma$, $\{d\varphi(E_i)\}_{i=1}^n\cup \{\nu_{\varphi(p)}\}$ is an orthonormal basis of $T_{\varphi(p)}M$ and hence
$$\begin{aligned}
\triangle_\Sigma f\circ \varphi(p)=&\sum_{i=1}^n{\rm Hess}_\Sigma f\circ\varphi(E_i,E_i)=\sum_{i=1}^n{\rm Hess}_M f(d\varphi(E_i),d\varphi(E_i))+n\langle \nabla f,\vec H\rangle_M\\
=&\triangle_M f(\varphi(p))-{\rm Hess}_Mf(\nu_{\varphi(p)},\nu_{\varphi(p)})+n\langle \nabla f,\vec H\rangle_M
\end{aligned}
$$

But now we are interested in the particular case when $f:M\to\mathbb{R}$ is the lift of a basic function $\overline f:\mathbb{B}\to \mathbb{R}$, namely $f=\overline f\circ \pi$. 
\begin{proposition}\label{propdua}Let $\Sigma$ be an hypersurface immersed in $M$ by  $\varphi:\Sigma\to M$, let $M$ admit a Killing submersion $\pi: M\to \mathbb{B}$ with unit-length Killing vector field $\xi\in\mathfrak{X}(M)$. Let $\overline f:\mathbb{B}\to\mathbb{R}$ be a smooth function on the base manifold. Denote by $f=\overline f\circ \pi$ the lift of $\overline f$. Then,
\begin{equation}\label{unua}
\triangle_\Sigma f\circ \varphi(p)=\triangle_\mathbb{B} \overline f(\pi\circ\varphi(p))-{\rm Hess}_B\overline f(d\pi(\nu),d\pi(\nu))+n\langle \nabla f,\vec H_\tau\rangle
\end{equation}
where 
\begin{equation}\label{bundlemeancurvature}
\vec H_\tau:=\vec H+\frac{2}{n}\langle \nu,\xi\rangle \nabla\xi(\nu_H)
\end{equation}
\end{proposition}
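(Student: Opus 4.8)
The plan is to start from the immersion identity derived immediately above the proposition, namely
\[
\triangle_\Sigma (f\circ\varphi)(p)=\triangle_M f(\varphi(p))-{\rm Hess}_M f(\nu,\nu)+n\langle \nabla f,\vec H\rangle_M,
\]
and to rewrite the two purely ambient terms $\triangle_M f$ and ${\rm Hess}_M f(\nu,\nu)$ in terms of data on the base $\mathbb{B}$, letting the vertical contributions generate the correction $\tfrac{2}{n}\langle\nu,\xi\rangle\nabla\xi(\nu_H)$ built into $\vec H_\tau$. First I would record two structural facts about the lift $f=\overline f\circ\pi$. Since $f$ is constant along fibers and $\xi$ is vertical, $\xi f=0$, so $\langle\nabla f,\xi\rangle=0$ and $\nabla f$ is horizontal; because $\pi$ is a Riemannian submersion, $\nabla f$ is in fact the horizontal lift of $\nabla^\mathbb{B}\overline f$. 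Second, for horizontal lifts $X,Y$ the O'Neill relation $(\nabla^M_XY)^H=$ (horizontal lift of $\nabla^\mathbb{B}_{d\pi(X)}d\pi(Y)$), combined with the horizontality of $\nabla f$, yields the pointwise identity ${\rm Hess}_M f(X,Y)={\rm Hess}_\mathbb{B}\overline f(d\pi(X),d\pi(Y))$ for every pair of horizontal vectors $X,Y$.

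With these in hand I would evaluate the ambient Laplacian in an adapted orthonormal frame $\{\xi,E_1,\dots,E_n\}$ with the $E_i$ horizontal. The vertical term vanishes: ${\rm Hess}_M f(\xi,\xi)=\xi(\xi f)-(\nabla_\xi\xi)f=0$, using $\xi f=0$ together with the fact, established in \S\ref{prelimsec}, that the fibers are geodesics ($\nabla_\xi\xi=0$). The horizontal terms sum, by the identity above, to $\triangle_\mathbb{B}\overline f$, giving $\triangle_M f=\triangle_\mathbb{B}\overline f\circ\pi$. For the normal Hessian I would split $\nu=\nu_H+\langle\nu,\xi\rangle\xi$ and expand ${\rm Hess}_M f(\nu,\nu)$ bilinearly into three pieces: the purely horizontal piece equals ${\rm Hess}_\mathbb{B}\overline f(d\pi(\nu),d\pi(\nu))$ (recall $d\pi(\nu)=d\pi(\nu_H)$), the purely vertical piece $\langle\nu,\xi\rangle^2{\rm Hess}_M f(\xi,\xi)$ vanishes as above, and the cross term is controlled by
\[
{\rm Hess}_M f(\nu_H,\xi)=\nu_H(\xi f)-(\nabla_{\nu_H}\xi)f=-\langle\nabla_{\nu_H}\xi,\nabla f\rangle=-\langle\nabla\xi(\nu_H),\nabla f\rangle,
\]
so that ${\rm Hess}_M f(\nu,\nu)={\rm Hess}_\mathbb{B}\overline f(d\pi(\nu),d\pi(\nu))-2\langle\nu,\xi\rangle\langle\nabla\xi(\nu_H),\nabla f\rangle$.

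Substituting both computations into the immersion identity and regrouping the two terms linear in $\nabla f$ gives $n\langle\nabla f,\vec H\rangle+2\langle\nu,\xi\rangle\langle\nabla\xi(\nu_H),\nabla f\rangle=n\langle\nabla f,\vec H_\tau\rangle$ with $\vec H_\tau$ exactly as in \eqref{bundlemeancurvature}, which is precisely \eqref{unua}. The step demanding the most care is the cross term ${\rm Hess}_M f(\nu_H,\xi)$: one must use \emph{simultaneously} that $\nabla f$ is horizontal, that $\nabla_v\xi$ is horizontal for every $v$ (shown via equation \eqref{eq2}), and that $\xi f\equiv 0$, so that the horizontal–vertical coupling of the Hessian is governed entirely by the tensor $\nabla\xi$ and produces the bundle-curvature correction. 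The remaining ingredients—the vanishing of ${\rm Hess}_M f(\xi,\xi)$ and the reduction of the horizontal Hessian and Laplacian to the base—are routine consequences of $\pi$ being a Riemannian submersion with geodesic fibers.
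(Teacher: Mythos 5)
Your proposal is correct and follows essentially the same route as the paper's own proof: the same immersion identity as starting point, the splitting $\nu=\nu_H+\langle\nu,\xi\rangle\xi$, the cross term ${\rm Hess}_M f(\nu_H,\xi)=-\langle\nabla\xi(\nu_H),\nabla f\rangle$ producing the correction in $\vec H_\tau$, the O'Neill projection of the horizontal Hessian to the base, and the adapted frame $\{\xi,E_1,\dots,E_n\}$ giving $\triangle_M f=\triangle_{\mathbb{B}}\overline f\circ\pi$. The only differences are cosmetic (you compute the Hessian via $X(Yf)-(\nabla_XY)f$ where the paper uses $Y\langle X,\nabla f\rangle-\langle\nabla_YX,\nabla f\rangle$), so nothing needs to change.
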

\begin{proof}
If $f:M\to\mathbb{R}$ is the lift of a basic function $\overline f:\mathbb{B}\to \mathbb{R}$, 
$$
{\rm Hess}_Mf(X,Y)=\langle X,\nabla_Y\nabla f\rangle_M=Y\langle X,\nabla f\rangle_M-\langle\nabla_YX,\nabla f\rangle_M
$$
Observe that since 
$$
\langle \nabla f,X\rangle_M=df(X)=d(\overline f\circ \pi)(X)=d\overline f(d\pi(X))=\langle \nabla \overline f,d\pi (X)\rangle_\mathbb{B}
$$
then $\nabla f$ is an horizontal vector field in $\mathfrak{X}(M)$, $\pi$-related with $\nabla\overline f\in \mathfrak{X}(\mathbb{B})$. Let us decompose $\nu=\nu_H+\nu_V$ in its horizontal and vertical part, then
$$
\begin{aligned}
{\rm Hess}_Mf(\nu,\nu)=&{\rm Hess}_Mf(\nu_H,\nu_H)+2{\rm Hess}_Mf(\nu_H,\nu_V)+{\rm Hess}_Mf(\nu_V,\nu_V)\\
=&{\rm Hess}_Mf(\nu_H,\nu_H)+2\langle\nu,\xi\rangle{\rm Hess}_Mf(\nu_H,\xi)+\langle\nu,\xi\rangle^2{\rm Hess}_Mf(\xi,\xi)\\
=&{\rm Hess}_Mf(\nu_H,\nu_H)-2\langle\nu,\xi\rangle\langle\nabla_{\nu_H}\xi,\nabla f\rangle-\langle\nu,\xi\rangle^2\langle\nabla_\xi\xi,\nabla f\rangle\\
=&{\rm Hess}_Mf(\nu_H,\nu_H)-2\langle\nu,\xi\rangle\langle\nabla\xi(\nu_H),\nabla f\rangle\\
=&\nu_H(\langle \nu_H,\nabla f\rangle)-\langle \nabla_{\nu_H}\nu_H,\nabla f\rangle-2\langle\nu,\xi\rangle\langle\nabla\xi(\nu_H),\nabla f\rangle\\
=&d\pi(\nu)(\langle d\pi(\nu),\nabla \overline f\rangle_\mathbb{B})-\langle \nabla_{d\pi(\nu)}^\mathbb{B}d\pi(\nu),\nabla \overline f\rangle_\mathbb{B}-2\langle\nu,\xi\rangle\langle\nabla\xi(\nu_H),\nabla f\rangle\\
=&{\rm Hess}_\mathbb{B}\overline f(d\pi(\nu),d\pi(\nu))-2\langle\nu,\xi\rangle\langle\nabla\xi(\nu_H),\nabla f\rangle
\end{aligned}
$$
where we have used that $d\pi(\nabla_{\nu_H}\nu_H)=\nabla_{d\pi(\nu)}^\mathbb{B}d\pi(\nu)$ see \cite{OneillSubmersion}. Therefore,
$$
\begin{aligned}
\triangle_\Sigma f\circ \varphi(p)=&\triangle_M f(\varphi(p))-{\rm Hess}_\mathbb{B}\overline f(d\pi(\nu),d\pi(\nu))+2\langle\nu,\xi\rangle\langle\nabla\xi(\nu_H),\nabla f\rangle\\ &+n\langle \nabla f,\vec H\rangle_M
\end{aligned}
$$
Moreover given the orthonormal basis $\{E_i\}_{i=1}^n\cup \{\xi\}$ (with $E_i$ horizontals),
$$
\begin{aligned}
\triangle_Mf(\varphi(p))=&\sum_{i=1}^n{\rm Hess}_Mf(E_i,E_i)+{\rm Hess}_Mf(\xi,\xi)\\
=&\sum_{i=1}^n{\rm Hess}_Mf(E_i,E_i)=\sum_{i=1}^n{\rm Hess}_\mathbb{B}\overline f(d\pi(E_i),d\pi(E_i))=\triangle_\mathbb{B}\overline f(\pi(\varphi(p)).
\end{aligned}
$$
Hence, finally
$$
\begin{aligned}
\triangle_\Sigma f\circ \varphi(p)=&\triangle_\mathbb{B} \overline f(\pi\circ\varphi(p))-{\rm Hess}_\mathbb{B}\overline f(d\pi(\nu),d\pi(\nu))+2\langle\nu,\xi\rangle\langle\nabla\xi(\nu_H),\nabla f\rangle\\&+n\langle \nabla f,\vec H\rangle
\end{aligned}
$$

In order to simply the expression we will make us of  the $\tau$-mean curvature $\vec H_\tau$ of $\Sigma$ defined in (\ref{bundlemeancurvature}). Then
\begin{equation}
\triangle_\Sigma f\circ \varphi(p)=\triangle_\mathbb{B} \overline f(\pi\circ\varphi(p))-{\rm Hess}_\mathbb{B}\overline f(d\pi(\nu),d\pi(\nu))+n\langle \nabla f,\vec H_\tau\rangle
\end{equation}
\end{proof}
\subsection{Radial functions on the base manifold}

Suppose that $\overline f :\mathbb{B}\to \mathbb{R}$ is a radial function with respect to the point $o\in \mathbb{B}$, in the sense that $\overline{f}(x)=\overline{f}(y)$ if $r_o(x)={\rm dist}_\mathbb{B}(o,x)=r_o(y)$, then there exists a function $F:\mathbb{R}\to\mathbb{R}$ such that 
$$
\overline f(x)=F\circ r_o (x)
$$
for any $x\in \mathbb{B}$.  Now, in the following proposition we will obtain bounds on the Hessian and Laplacian of $\overline f$  
\begin{proposition}\label{proptria}Let $\mathbb{B}$ a Riemannian manifold, let $o\in \mathbb{B}$, and denote by $r_o:\mathbb{B}\to \mathbb{R}$ the distance function in $\mathbb{B}$ to $o$, \emph{i.e.}, $r_o(p)={\rm dist}_\mathbb{B}(o,p)$. Assume moreover that the sectional curvatures of $\mathbb{B}$ are bounded from above and below for any plane of the tangent space,
$$
k\leq {\rm sec}(\mathbb{B})\leq \kappa.
$$
Then, for any function $F:\to \mathbb{R}\to\mathbb{R}$ with $F'\geq 0$,
\begin{equation}\label{dua}\begin{aligned}
 -{\rm Hess}_\mathbb{B}\overline f_x(X,X)\geq&-F''(t)\langle \nabla r_o,X\rangle^2-F'(t)\frac{{\rm sn}_k'(t)}{{\rm sn}_k(t)}\left(\Vert X\Vert^2-\langle X,\nabla r_o\rangle^2\right)\\
 \triangle_\mathbb{B}F\geq & F''(t)+(n-1)F'(t)\frac{{\rm sn}_K'(t)}{{\rm sn}_K(t)}
\end{aligned}
\end{equation}
where here $\overline f=F\circ r_o$ , $t=r_0(x)$ and
$$
{\rm sn}_K(t):=\left\{\begin{array}{lcl}
\frac{\sin(\sqrt{K}t)}{\sqrt{K}}&{\rm if}& K>0\\
t&{\rm if}&K=0\\
\frac{\sinh(\sqrt{-K}t)}{\sqrt{-K}}&{\rm if}& K<0
\end{array}\right.
$$
\end{proposition}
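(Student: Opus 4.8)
The plan is to derive both inequalities from the classical Hessian and Laplacian comparison theorems for the distance function $r_o$, combined with the chain rule for the radial composition $\overline{f}=F\circ r_o$. I work at a point $x$ where $r_o$ is smooth, i.e.\ inside the injectivity radius of $o$ and off the cut locus; this is exactly the regime in which Theorem~\ref{maintheo} is applied ($R<{\rm inj}(o)$), so the restriction is harmless and I would state it explicitly at the outset.

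First I would record the chain rule. Since $\nabla\overline{f}=F'(t)\,\nabla r_o$ with $t=r_o(x)$ and $\Vert\nabla r_o\Vert=1$, a further covariant differentiation gives, for every $X\in T_x\mathbb{B}$,
\[
{\rm Hess}_\mathbb{B}\overline{f}(X,X)=F''(t)\langle X,\nabla r_o\rangle^2+F'(t)\,{\rm Hess}_\mathbb{B}r_o(X,X).
\]
Here I use that $\nabla r_o$ is a unit geodesic field, so differentiating $\langle\nabla r_o,\nabla r_o\rangle=1$ yields ${\rm Hess}_\mathbb{B}r_o(\nabla r_o,\cdot)=0$ and only the part of ${\rm Hess}\,r_o$ transverse to $\nabla r_o$ contributes. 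Next I would invoke the Hessian comparison theorem: under $k\le{\rm sec}(\mathbb{B})\le\kappa$ the Hessian of $r_o$ is pinched between the two constant-curvature model values,
\[
\frac{{\rm sn}_\kappa'(t)}{{\rm sn}_\kappa(t)}\bigl(\Vert X\Vert^2-\langle X,\nabla r_o\rangle^2\bigr)\le{\rm Hess}_\mathbb{B}r_o(X,X)\le\frac{{\rm sn}_k'(t)}{{\rm sn}_k(t)}\bigl(\Vert X\Vert^2-\langle X,\nabla r_o\rangle^2\bigr),
\]
the lower estimate coming from the upper curvature bound $\kappa$ and the upper estimate from the lower curvature bound $k$. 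Because $F'\ge0$, substituting the right-hand (upper) estimate into the chain rule and negating produces exactly the first inequality of (\ref{dua}).

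For the Laplacian I would trace the chain-rule identity over an orthonormal frame $\{e_i\}$ with $e_1=\nabla r_o$: the radial term contributes $F''(t)$, while the $n-1$ transverse terms are bounded below by the Laplacian comparison theorem $\triangle_\mathbb{B}r_o\ge(n-1)\,{\rm sn}_\kappa'(t)/{\rm sn}_\kappa(t)$, again a consequence of the upper curvature bound (this is the $K=\kappa$ appearing in the statement). Using $F'\ge0$ once more gives $\triangle_\mathbb{B}\overline{f}\ge F''(t)+(n-1)F'(t)\,{\rm sn}_\kappa'(t)/{\rm sn}_\kappa(t)$, which is the second inequality.

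The only genuine subtlety, and the step I would treat most carefully, is keeping the direction of the comparison straight: larger curvature makes nearby geodesics converge faster and hence \emph{decreases} the Hessian of $r_o$, so the upper bound $\kappa$ yields the lower Hessian and Laplacian estimates while the lower bound $k$ yields the upper Hessian estimate; the sign hypothesis $F'\ge0$ is precisely what converts these into the stated bounds. The comparison theorems themselves rest on the Riccati equation satisfied by the shape operators of the geodesic spheres about $o$, and are standard, so I would cite them rather than reprove them.
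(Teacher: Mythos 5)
Your proposal is correct and follows essentially the same route as the paper: the chain rule ${\rm Hess}_\mathbb{B}\overline{f}=F''\,dr_o\otimes dr_o+F'\,{\rm Hess}_\mathbb{B}r_o$ combined with the two-sided Hessian comparison (the paper cites Theorem 27 of \cite{Petersen}) and its trace for the Laplacian, using $F'\geq 0$ to keep the inequalities in the stated direction. Your explicit remarks on working off the cut locus and on which curvature bound yields which side of the comparison are points the paper leaves implicit, but they do not constitute a different argument.
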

\begin{proof}
By using the definition of the Hessian and the chain rule,
$$\begin{aligned}
{\rm Hess}_\mathbb{B}\overline f_x(X,X)=&\langle \nabla_X\nabla f,X\rangle=\langle \nabla_XF'\nabla r_0,X\rangle\\= &F''(t)\langle \nabla r_0,X\rangle^2+F'(t)\langle \nabla_X\nabla r_0,X\rangle\\
=&F''(t)\langle \nabla r_0,X\rangle^2+F'(t){\rm Hess}_\mathbb{B}r_o(X,X).
\end{aligned}
$$
Therefore,
$$
\triangle_\mathbb{B}\overline f(x)=F''(t)+F'(t)\triangle_\mathbb{B}r_o(x).
$$
But if the sectional curvatures of the base manifold are bounded as $k\leq{\rm sec}\leq \kappa$, see Theorem 27 of \cite{Petersen},
$$
\frac{{\rm sn}_\kappa'(r_o(x))}{{\rm sn}_\kappa(r_o(x))}\left(\Vert X\Vert^2-\langle X,\nabla r_o\rangle^2\right)\leq {\rm Hess}_\mathbb{B}r_o(X,X)
$$
and
$$
{\rm Hess}_\mathbb{B}r_o(X,X)\leq \frac{{\rm sn}_k'(r_o(x))}{{\rm sn}_k(r_o(x))}\left(\Vert X\Vert^2-\langle X,\nabla r_o\rangle^2\right)
$$
Then,
$$
\frac{{\rm sn}_\kappa'(r_o(x))}{{\rm sn}_\kappa(r_o(x))}(n-1)\leq\triangle_\mathbb{B} r_0\leq \frac{{\rm sn}_k'(r_o(x))}{{\rm sn}_k(r_o(x))}(n-1)
$$

Hence, finally if $F'>0$,
\begin{equation}\begin{aligned}
 -{\rm Hess}_\mathbb{B}\overline f_x(X,X)\geq&-F''(t)\langle \nabla r_o,X\rangle^2-F'(t)\frac{{\rm sn}_k'(t)}{{\rm sn}_k(t)}\left(\Vert X\Vert^2-\langle X,\nabla r_o\rangle^2\right)\\
 \triangle_\mathbb{B}F\geq & F''(t)+(n-1)F'(t)\frac{{\rm sn}_\kappa'(t)}{{\rm sn}_\kappa(t)}
\end{aligned}
\end{equation}
\end{proof}
If we have a Killing submersion $\pi:M\to \mathbb{B}$ we can lift the radial function $\overline f$ to $\widetilde f=\overline f\circ \pi$  and using equation (\ref{unua}) of proposition \ref{propdua} we obtain for $F'>0$,
\begin{equation}\label{tria}\begin{aligned}
\triangle_\Sigma f(z)\geq & F''(t)\left(1-\langle \nabla r_o,d\pi(\nu)\rangle^2\right)+(n-1)F'(t)\frac{{\rm sn}_\kappa'(t)}{{\rm sn}_\kappa(t)}\\ &-F'(t)\frac{{\rm sn}_k'(t)}{{\rm sn}_k(t)}\left(\Vert d\pi(\nu)\Vert^2-\langle d\pi(\nu),\nabla r_o\rangle^2\right)\\ & +nF'(t)\langle \nabla r_o,\vec H_\tau\rangle\end{aligned}
\end{equation}
where $f=\widetilde f\circ\varphi$ and $t=r_0(\pi\circ\varphi(z))$. This above inequality can be rewritten in the following corollary,

\begin{corollary}\label{corunua}Let $\Sigma$ be an hypersurface immersed in $M$ by  $\varphi:\Sigma\to M$, let $M$ admit a Killing submersion $\pi: M\to \mathbb{B}$ with unit-length Killing vector field. Suppose that the sectional curvatures of $\mathbb{B}$ are bounded from above and below by
$$
k\leq {\rm sec}(B)\leq \kappa.
$$
Let $F:\mathbb{R}\to \mathbb{R}$ be a smooth and non-decreasing function, let $r_o:B\to \mathbb{R}$ be the distance function in $B$ to $o\in\mathbb{B}$, \emph{i.e.}, $r_o(p)={\rm dist}_B(o,p)$, denote by $f=F\circ r_o\circ \pi$. Then,
\begin{equation}\label{tria2}\begin{aligned}
\triangle_\Sigma f(z)\geq & F''(t)+(n-1)F'(t)\frac{{\rm sn}_\kappa'(t)}{{\rm sn}_\kappa(t)}-F'(t)\frac{{\rm sn}_k'(t)}{{\rm sn}_k(t)}\Vert d\pi(\nu)\Vert^2\\&+\left(F'(t)\frac{{\rm sn}_k'(t)}{{\rm sn}_k(t)}-F''(t)\right)\langle \nabla r_o,d\pi(\nu)\rangle^2\\ & +nF'(t)\langle \nabla r_o,\vec H_\tau\rangle\end{aligned}
\end{equation}

where $t=r_o(\pi(\varphi(z)))$ and $\vec H_\tau$ is given by definition (\ref{bundlemeancurvature}).
\end{corollary}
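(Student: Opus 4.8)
The plan is to combine the pointwise identity for $\triangle_\Sigma f$ furnished by proposition \ref{propdua} with the one-sided Hessian and Laplacian comparison estimates of proposition \ref{proptria}, and then merely collect terms. Since $\overline f = F\circ r_o$ is radial and $f = F\circ r_o\circ\pi\circ\varphi$, equation (\ref{unua}) reads
\begin{equation*}
\triangle_\Sigma f(z) = \triangle_\mathbb{B}\overline f(\pi(\varphi(z))) - {\rm Hess}_\mathbb{B}\overline f(d\pi(\nu),d\pi(\nu)) + n\langle \nabla f,\vec H_\tau\rangle,
\end{equation*}
so the whole problem reduces to estimating the first two summands from below and to identifying the last one.

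First I would recall, as already observed in the proof of proposition \ref{propdua}, that $\nabla f$ is the horizontal lift of $\nabla\overline f = F'(t)\nabla r_o$; hence $\langle \nabla f,\vec H_\tau\rangle = F'(t)\langle \nabla r_o,\vec H_\tau\rangle$, which accounts for the final summand $nF'(t)\langle \nabla r_o,\vec H_\tau\rangle$ of the claim. Next, since $F'\geq 0$, both estimates of proposition \ref{proptria} apply. The lower bound for the base Laplacian, which uses the upper curvature bound ${\rm sec}\leq\kappa$, gives $\triangle_\mathbb{B}\overline f \geq F''(t) + (n-1)F'(t)\frac{{\rm sn}_\kappa'(t)}{{\rm sn}_\kappa(t)}$, while the lower bound for $-{\rm Hess}_\mathbb{B}\overline f$, which after the sign flip invokes the lower curvature bound $k\leq{\rm sec}$, gives
\begin{equation*}
-{\rm Hess}_\mathbb{B}\overline f(d\pi(\nu),d\pi(\nu)) \geq -F''(t)\langle \nabla r_o,d\pi(\nu)\rangle^2 - F'(t)\frac{{\rm sn}_k'(t)}{{\rm sn}_k(t)}\left(\Vert d\pi(\nu)\Vert^2 - \langle \nabla r_o,d\pi(\nu)\rangle^2\right).
\end{equation*}
Substituting these three ingredients into the displayed identity reproduces exactly inequality (\ref{tria}).

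It then remains to rearrange (\ref{tria}) into the stated form. The only bookkeeping is to expand the product $F'(t)\frac{{\rm sn}_k'(t)}{{\rm sn}_k(t)}\bigl(\Vert d\pi(\nu)\Vert^2 - \langle \nabla r_o,d\pi(\nu)\rangle^2\bigr)$ and to isolate the two contributions proportional to $\langle \nabla r_o,d\pi(\nu)\rangle^2$: the term $-F''(t)\langle \nabla r_o,d\pi(\nu)\rangle^2$ arising from the Hessian estimate, and the term $+F'(t)\frac{{\rm sn}_k'(t)}{{\rm sn}_k(t)}\langle \nabla r_o,d\pi(\nu)\rangle^2$ arising from the expansion. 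Grouping them produces the single coefficient $\bigl(F'(t)\frac{{\rm sn}_k'(t)}{{\rm sn}_k(t)} - F''(t)\bigr)$ in front of $\langle \nabla r_o,d\pi(\nu)\rangle^2$, which is precisely the form in the statement. There is essentially no analytic obstacle here; the only point requiring care is tracking which curvature bound ($\kappa$ versus $k$) controls which term and ensuring that the inequality senses are preserved under the negation of the Hessian, so that every estimate contributes a genuine lower bound for $\triangle_\Sigma f$.
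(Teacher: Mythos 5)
Your proposal is correct and follows exactly the paper's own route: the paper derives inequality (\ref{tria}) by substituting the comparison bounds of proposition \ref{proptria} into identity (\ref{unua}) of proposition \ref{propdua} (using that $\nabla f$ is the horizontal lift of $F'(t)\nabla r_o$), and then obtains (\ref{tria2}) by the same regrouping of the $\langle \nabla r_o,d\pi(\nu)\rangle^2$ terms that you carry out. Your bookkeeping of which curvature bound ($\kappa$ for the Laplacian lower bound, $k$ for the negated Hessian) enters each term matches the paper precisely.
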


\subsection{Stochastic Completeness, weak maximum principle and Omori-Yau maximum principle}\label{stochastic}
Let $\Sigma$ be a complete and non compact Riemannian manifold. The heat kernel of $\Sigma$ is a function $p_t(x,y)$ on $(0,\infty)\times \Sigma\times \Sigma$ which is the minimal positive fundamental solution to the heat equation
$$
\frac{\partial v}{\partial t}=\triangle v.
$$
In other words, the Cauchy problem 
$$
\left\{
\begin{aligned}
&\frac{\partial v}{\partial t}=\triangle v\\
&v\vert_{t=0}=v_0(x)
\end{aligned}\right.
$$
has a solution 
$$
v(x,t)=\int_\Sigma p_t(x,y)v_0(y)dV(y)
$$
provided that $v_0$ is a bounded continuous positive function. The manifold $\Sigma$ is said to be stochastically complete, see \cite{GriExp}, if
$$
\int_\Sigma p_t(x,y)dV(y)=1
$$
for any $x\in \Sigma$ and any $t>0$.
The main property of stochastic completeness which is used in this paper is that if a Riemannian manifold is stochastic complete a weak maximum principle is satisfied for bounded functions in $C^2$. 
More precisely, if $\Sigma $ is stochastically complete we can state the following theorem

\begin{theorem}[See \cite{Pigola2003}]\label{teopigola}Let $\Sigma$ be a connected non-compact Riemannian manifold. Suppose that $\Sigma$ is stochastically complete, then for every $u\in C^2(\Sigma)$ with $\sup_\Sigma u<\infty$ there exists a sequence $\{x_k\}$, $k=1,2,\ldots$, such that, for every $k$, $u(x_k)\geq \sup_\Sigma u-1/k$ and $\triangle u(x_k)\leq 1/k$.
\end{theorem}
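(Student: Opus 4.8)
The statement is a known characterization that the author only cites, but here is how I would argue it. The plan is to prove the contrapositive: \emph{if the asserted conclusion fails, then $\Sigma$ is stochastically incomplete}. The analytic engine will be Grigor'yan's characterization of stochastic completeness (see \cite{GriExp}): a complete manifold $\Sigma$ is stochastically complete if and only if, for some (equivalently every) $\lambda>0$, the only nonnegative bounded function $v\in C^2(\Sigma)$ satisfying $\triangle v\geq \lambda v$ is $v\equiv 0$. Thus it suffices to manufacture, out of a function for which the conclusion fails, a nontrivial nonnegative bounded solution of $\triangle v\geq \lambda v$.

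First I would unwind the negation. Suppose some $u\in C^2(\Sigma)$ with $u^{*}:=\sup_\Sigma u<\infty$ admits no sequence as in the statement. Reformulating the sequence condition as ``for every $\eta>0$ there is a point $x$ with $u(x)>u^{*}-\eta$ and $\triangle u(x)<\eta$'', its failure produces a single $\eta_0>0$ such that $\triangle u\geq \eta_0$ on the superlevel set $\Omega:=\{x:u(x)>u^{*}-\eta_0\}$, which is nonempty because $u^{*}$ is the supremum. I would then fix a level $\gamma_1\in(u^{*}-\eta_0,u^{*})$, so that $\{u>\gamma_1\}\subset\Omega$ still contains points where $u$ exceeds $\gamma_1$.

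Next I would build the test function by composing $u$ with a fixed cut-and-grow profile. Set $v:=(u-\gamma_1)_{+}^{3}$, which is $C^2$ (the profile $s\mapsto s_{+}^{3}$ is $C^2$ across $s=0$), nonnegative, bounded because $u$ is bounded, and not identically zero because $u$ attains values above $\gamma_1$. A direct computation gives, on $\{u>\gamma_1\}$,
\[
\triangle v=6(u-\gamma_1)\,|\nabla u|^{2}+3(u-\gamma_1)^{2}\,\triangle u\geq 3(u-\gamma_1)^{2}\,\eta_0 ,
\]
while on $\{u\leq\gamma_1\}$ one has $v\equiv 0$ together with $\nabla v=0$ and $\triangle v=0$ across the free boundary $\{u=\gamma_1\}$. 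Choosing $\lambda:=3\eta_0/(u^{*}-\gamma_1)>0$ and using $u-\gamma_1\leq u^{*}-\gamma_1$, the lower bound rearranges to $\triangle v\geq \lambda v$ on all of $\Sigma$. Grigor'yan's criterion then forces $\Sigma$ to be stochastically incomplete, contradicting the hypothesis; hence the asserted sequence must exist.

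The routine verifications---that $s\mapsto s_{+}^{3}$ is genuinely $C^2$ and that the inequality $\triangle v\geq\lambda v$ passes continuously through $\{u=\gamma_1\}$---are straightforward. The real content, and the step I expect to be the main obstacle, is Grigor'yan's equivalence itself: translating the semigroup definition of stochastic completeness ($\int_\Sigma p_t(x,y)\,dV(y)\equiv 1$) into the Liouville-type uniqueness statement for $\triangle v\geq\lambda v$. That equivalence is proved by heat-semigroup and minimal-positive-solution arguments and is precisely what makes the whole chain work; I would invoke it from \cite{GriExp} rather than reprove it, which is presumably why the author states the theorem with only a reference to \cite{Pigola2003}.
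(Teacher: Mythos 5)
The paper never proves this statement: it is quoted as a known result from \cite{Pigola2003} and used as a black box, so there is no in-paper proof to compare yours against. Judged on its own merits, your argument is correct, and it is in fact essentially the argument of the cited source. Your negation step is sound (failure of the conclusion yields $\eta_0>0$ with $\triangle u\geq \eta_0$ on the nonempty superlevel set $\{u>u^*-\eta_0\}$, and your strict inequalities do imply the theorem's non-strict ones, so the contrapositive is set up correctly). The function $v=(u-\gamma_1)_+^{3}$ is genuinely $C^2$, since the profile $s\mapsto s_+^3$ has vanishing first and second derivatives at $0$, so the chain-rule identity $\triangle v=f''(u)\Vert\nabla u\Vert^{2}+f'(u)\triangle u$ holds across $\{u=\gamma_1\}$ and gives $\triangle v=0$ on $\{u\leq\gamma_1\}$; with $\lambda=3\eta_0/(u^*-\gamma_1)$ the inequality $\triangle v\geq\lambda v$ then holds globally. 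This cubing trick is a clean device: Pigola--Rigoli--Setti work instead with the truncation $(u-\gamma_1)_+$, which is only Lipschitz, and must interpret the differential inequality weakly. The one point you should make explicit is exactly which form of Grigor'yan's criterion you invoke: what your proof needs is the \emph{subsolution} form (the only bounded nonnegative $v$ with $\triangle v\geq\lambda v$ is $v\equiv 0$), not merely uniqueness of bounded nonnegative solutions of the equation $\triangle v=\lambda v$. The subsolution form is the one recorded in \cite{Pigola2003}, and it follows from the equation form by a standard exhaustion-and-comparison argument (solve $\triangle h_k=\lambda h_k$ on an exhaustion $\Omega_k$ with boundary data $\sup v$, deduce $v\leq h_{k+1}\leq h_k\leq \sup v$ by the maximum principle, and let $k\to\infty$); as written, your appeal to \cite{GriExp} silently assumes this reduction. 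A final cosmetic remark: geodesic completeness is nowhere needed (Grigor'yan's criterion holds on arbitrary manifolds, and the theorem assumes only stochastic completeness), so the word ``complete'' should be dropped from your statement of the criterion.
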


On the other hand (see \cite{Alias2016}), a Riemannian manifold $(M,g)$ satisfies the \emph{Omori-Yau maximum principle for the Laplacian} if for any function $u\in C^2(M)$ which is bounded $\sup_M u=u^*<\infty$, there exists a sequence $\{x_i\}_{i\in \mathbb{N}}\subset M$ such that 
$$
u(x_i)>u^*-\frac{1}{i},\quad \Vert \nabla u(x_i)\Vert<\frac{1}{i},\quad \triangle u(x_i)<\frac{1}{i}.
$$
In this paper we will use the following sufficient condition for the Omori-Yau maximum principle

\begin{theorem}[See \cite{Alias2016}]\label{AliasTeo}Let $\Sigma$ be a connected non-compact Riemannian manifold. Suppose that $\Sigma$ admits a $C^2$ function $f:\Sigma\to\mathbb{R}$ satisfying 
\begin{enumerate}
\item $f(x)\to\infty$ when $x\to\infty$
\item $\Vert \nabla f\Vert \leq G(f)$ outside a compact subset of $\Sigma$.
\item $\triangle f\leq G(f)$  outside a compact subset of $\Sigma$.
\end{enumerate}
with $G\in C^1(\mathbb{R}_+)$, positive near infinity and such that
$$
\frac{1}{G}\notin L^1(+\infty)\quad {\rm and}\quad G'(t)\geq -A(\log(t)+1)
$$
for $t$ large enough and $A\geq 0$.  Then, the Omori-Yau maximum principle for the Laplacian holds on $\Sigma$.
\end{theorem}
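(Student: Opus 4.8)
The plan is to establish the Omori--Yau maximum principle by the classical perturbation (Ekeland-type) argument, after first distilling the three hypotheses on $f$ and $G$ into a single auxiliary exhaustion function with bounded gradient and tightly controlled Laplacian. Let $u\in C^2(\Sigma)$ with $u^{*}:=\sup_\Sigma u<\infty$. If $u^{*}$ is attained at an interior point $p$, then $\nabla u(p)=0$ and $\triangle u(p)\le 0$, so the constant sequence $x_i\equiv p$ works; the whole content lies in the case where $u^{*}$ is \emph{not} attained, where one must manufacture near-maximizers escaping to infinity whose gradient and Laplacian are nevertheless controlled.

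First I would build the auxiliary function. Since $G$ is positive near infinity, set $\phi(t)=\int_{t_0}^{t}\frac{ds}{G(s)}$ for $t_0$ large and define $\gamma:=\phi\circ f$. Using hypotheses (2) and (3) together with $\phi'=1/G>0$ and $\phi''=-G'/G^{2}$, I compute, for $f$ large,
\[
\Vert\nabla\gamma\Vert=\frac{\Vert\nabla f\Vert}{G(f)}\le 1,\qquad
\triangle\gamma=\phi'(f)\,\triangle f+\phi''(f)\,\Vert\nabla f\Vert^{2}\le 1-\frac{G'(f)}{G(f)^{2}}\Vert\nabla f\Vert^{2}\le 1+A\bigl(\log f+1\bigr),
\]
where the last inequality uses $\Vert\nabla f\Vert^{2}\le G(f)^{2}$ together with the lower bound $G'(t)\ge -A(\log t+1)$ (the range $G'\ge 0$ only helps). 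The non-integrability $1/G\notin L^{1}(+\infty)$ gives $\phi(t)\to\infty$, so $\gamma$ inherits the exhaustion property $\gamma(x)\to\infty$ from hypothesis (1). Thus $\gamma$ is an exhaustion with bounded gradient and at most logarithmically growing Laplacian --- exactly the data the perturbation argument consumes.

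Next I would run the perturbation. For $\varepsilon>0$ consider $u_\varepsilon:=u-\varepsilon\gamma$. Because $\gamma\to\infty$ and $u$ is bounded above, $u_\varepsilon\to-\infty$ at infinity and hence attains a maximum at some $x_\varepsilon\in\Sigma$. Comparing $u_\varepsilon(x_\varepsilon)$ with the values of $u_\varepsilon$ on a fixed near-maximizing set shows $u(x_\varepsilon)\to u^{*}$ and, since $u^{*}$ is not attained, $f(x_\varepsilon)\to\infty$ as $\varepsilon\to 0$. At the interior maximum $x_\varepsilon$ one has $\nabla u(x_\varepsilon)=\varepsilon\nabla\gamma(x_\varepsilon)$ and $\triangle u(x_\varepsilon)\le\varepsilon\triangle\gamma(x_\varepsilon)$, whence
\[
\Vert\nabla u(x_\varepsilon)\Vert\le\varepsilon\longrightarrow 0,\qquad
\triangle u(x_\varepsilon)\le\varepsilon\bigl(1+A(\log f(x_\varepsilon)+1)\bigr).
\]
The gradient and level conditions are then immediate.

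The hard part is the Laplacian estimate, precisely because $f(x_\varepsilon)\to\infty$, so the factor $\varepsilon\log f(x_\varepsilon)$ is a genuine indeterminate form rather than an automatic null sequence. The available control is the a priori bound $\varepsilon\gamma(x_\varepsilon)=\varepsilon\phi(f(x_\varepsilon))\le C$, obtained by comparing $u_\varepsilon(x_\varepsilon)$ with $u_\varepsilon$ at a fixed base point; this yields $\varepsilon\log f(x_\varepsilon)\le C\,\log f(x_\varepsilon)/\phi(f(x_\varepsilon))$, which tends to $0$ whenever $\phi$ outgrows $\log$, but is only borderline when $G(t)\sim t$. To close this borderline case I would not let $\varepsilon\to 0$ along a single scale but re-run the maximization with a refined perturbation $\psi(\gamma)$ for a sublinear, adaptively chosen $\psi$ (an iterated-logarithm composition), using the full strength of $1/G\notin L^{1}$ to keep $\psi(\gamma)$ an exhaustion while forcing $\psi'(\gamma)\triangle\gamma+\psi''(\gamma)\Vert\nabla\gamma\Vert^{2}$ to vanish along the maximizing sequence; the lower bound $G'\ge -A(\log t+1)$ is exactly what makes the logarithmic Laplacian term compatible with this choice. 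Extracting a diagonal sequence $x_k$ from the resulting family then produces the required $u(x_k)>u^{*}-1/k$, $\Vert\nabla u(x_k)\Vert<1/k$, and $\triangle u(x_k)<1/k$. I expect this borderline logarithmic bookkeeping to be the only real obstacle; everything else is the standard Ekeland/maximum-point mechanism.
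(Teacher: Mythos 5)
You should first note that the paper does not prove this statement at all: Theorem \ref{AliasTeo} is quoted verbatim from \cite{Alias2016} and used as a black box (in the proof of Proposition \ref{novaprop5.2}), so your attempt can only be judged on its own merits. The routine parts are correct: with $\phi(t)=\int_{t_0}^{t}ds/G(s)$ and $\gamma=\phi\circ f$ one gets an exhaustion with $\Vert\nabla\gamma\Vert\le 1$ and $\triangle\gamma\le 1+A(\log f+1)$ outside a compact set, the maximum of $u-\varepsilon\gamma$ exists and escapes to infinity when $u^*$ is not attained, and the value and gradient assertions follow; indeed, when $G'\ge 0$ (the classical Khas'minskii/Pigola--Rigoli--Setti setting, cf.\ \cite{Pigola2003}) your argument is already a complete proof, because then $\triangle\gamma\le 1$. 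Moreover the case you single out as the hard borderline, $G(t)\sim t$, is in fact harmless: there $\log t\asymp\phi(t)$, hence $\triangle\gamma\le C(1+\gamma)$, and one further composition $\hat\gamma=\log(1+\gamma)$ gives an exhaustion with $\Vert\nabla\hat\gamma\Vert\le 1$ and $\triangle\hat\gamma\le C'$, after which your perturbation closes the proof. So your diagnosis of where the difficulty sits is inverted.

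The genuine gap is the regime you never address: the hypotheses give no lower bound for $\phi(t)$ in terms of $\log t$ at the points where $G'<0$, and in that regime no composition --- iterated logarithms, adaptive choices, diagonal subsequences --- can possibly work, so your proposed repair is structurally dead, not a matter of ``bookkeeping''. Concretely, let $a_k=e^{k^2}$ and let $G$ (smoothed to $C^1$) descend on $[a_k,2a_k]$ at the maximal allowed rate $-A(\log t+1)\approx-Ak^{2}$ from $P_k\approx Aa_kk^{2}$ down to $m_k=P_ke^{-k^{2}}$, then jump back up to a large superlinear background until $a_{k+1}$. All hypotheses hold: $G>0$, $G'\ge-A'(\log t+1)$, and $1/G\notin L^{1}$, because each descending flank carries $\phi$-mass $\frac{1}{Ak^{2}}\log(P_k/m_k)=1/A$, while bottoms and gaps carry mass $O(k^{-2})$. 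At flank points one has $(-G'(f))_+\asymp\log f\asymp k^{2}$ while $\gamma\asymp k$, and the pointwise inequalities (2)--(3) are consistent with $\Vert\nabla f\Vert=G(f)$ there; hence the only bound your method can extract is $\triangle\gamma\lesssim\gamma^{2}$ along points going to infinity. Any reparametrized test function is of the form $\psi(\gamma)$ with $\psi$ increasing and unbounded, and for the perturbation step to conclude it would need $\bigl(\psi''(s)+cA\,s^{2}\psi'(s)\bigr)_+=o(\psi(s))$ on the flank intervals (which carry essentially all the $\phi$-mass) and $\psi'(s)=o(\psi(s))$ elsewhere; integrating these constraints gives $\psi(s_{k+1})\le\psi(s_k)\bigl(1+O(k^{-2})\bigr)$ per period, so every admissible $\psi$ is bounded and is therefore not an exhaustion. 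In other words, from the pointwise hypotheses alone the quantity $\varepsilon\triangle(\psi\circ\gamma)(x_\varepsilon)$ can never be forced to zero for this $G$, whatever $\psi$ you choose. Your scheme thus proves the theorem under the stronger hypothesis $G'\ge 0$ (or when $(-G')_+(t)\le\Theta(\phi(t))$ for some nondecreasing $\Theta$ with $1/\Theta\notin L^{1}$), but not the stated theorem with $G'\ge-A(\log t+1)$; the proof of the full statement in \cite{Alias2016} necessarily proceeds along different lines than a barrier-plus-perturbation argument of this kind.
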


\section{Proof of theorem \ref{maintheo} }\label{sec4}
The statement of the theorem \ref{maintheo}  is as follows
\begin{theorem*}
Let $\Sigma$ be a complete and non-compact Riemannian manifold. Let $\varphi:\Sigma\to M$ be an isometric immersion. Suppose that $M$ admits a Killing submersion $\pi:M\to \mathbb{B}$ with unit-length Killing vector field, suppose moreover that $\varphi(\Sigma)\subset \pi^{-1}(B_R^{\mathbb{B}}(p))$ for some geodesic ball $B_R^{\mathbb{B}}(o)$ of radius $R$ centered at $o\in \mathbb{B}$. Assume that the sectional curvatures are bounded ${\rm sec }\leq \kappa$ in $B_R^{\mathbb{B}}(o)$ and that
$$
R<\min\left\{{\rm inj}(o),\,\frac{\pi}{2\sqrt{\kappa}}\right\}
$$
where ${\rm inj}(o)$ is the injectivity radius of $o$ and we replace $\pi/2\sqrt{\kappa}$ by $+\infty$ if $\kappa<0$. Then, if $\Sigma$ is stochastically complete, the supremum of the norm of the mean curvature vector field of $\Sigma$ satisfies 
$$
\sup_\Sigma\Vert \vec H\Vert\geq h_{\kappa}^n(R)-\frac{\tau_\Sigma}{n}
$$
where $h_{\kappa}^n(R)$ is the norm of the mean curvature of the generalized cylinder $\partial B^{\mathbb{M}^n(\kappa)}_R\times \mathbb{R}$ in $\mathbb{M}^n(\kappa)\times \mathbb{R}$.
\end{theorem*}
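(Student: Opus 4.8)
The plan is to feed a carefully chosen radial test function into Corollary \ref{corunua} and then invoke the weak maximum principle of Theorem \ref{teopigola}. Since $\overline{B_R^{\mathbb{B}}(o)}$ is compact (as $R<{\rm inj}(o)$), the sectional curvatures of $\mathbb{B}$ are also bounded from below there, say ${\rm sec}\geq k$; this $k$ will serve only as an auxiliary parameter and must disappear from the final estimate. I would take $f=F\circ r_o\circ\pi\circ\varphi$ with $F$ determined by $F'={\rm sn}_k$, $F(0)=0$, so that $F''={\rm sn}_k'$ and hence $F'\frac{{\rm sn}_k'}{{\rm sn}_k}-F''=0$. Because $r_o\circ\pi\circ\varphi<R$ on $\Sigma$, the function $f$ is bounded; and since $F'(0)=0$ and $F$ is even, $f$ is $C^2$ across the fibre over $o$, so $f\in C^2(\Sigma)$ with $\sup_\Sigma f<\infty$.

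With this choice the two terms of (\ref{tria2}) carrying the lower bound $k$ collapse favorably: the coefficient $F'\frac{{\rm sn}_k'}{{\rm sn}_k}-F''$ of $\langle\nabla r_o,d\pi(\nu)\rangle^2$ vanishes, while $F''$ and $-F'\frac{{\rm sn}_k'}{{\rm sn}_k}\Vert d\pi(\nu)\Vert^2$ combine to ${\rm sn}_k'\,(1-\Vert d\pi(\nu)\Vert^2)$. The key observation is that $d\pi(\nu)=d\pi(\nu_H)$ and $d\pi$ is an isometry on horizontal vectors, so $\Vert d\pi(\nu)\Vert^2=\Vert\nu_H\Vert^2=1-\langle\nu,\xi\rangle^2$; thus $1-\Vert d\pi(\nu)\Vert^2=\langle\nu,\xi\rangle^2\geq0$, and since $R<\pi/(2\sqrt{\kappa})$ forces ${\rm sn}_k'>0$ and ${\rm sn}_k>0$ on $(0,R]$ for every admissible $k\leq\kappa$, this residual term is non-negative and may be discarded. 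Corollary \ref{corunua} then reduces, at any $z$ with $t=r_o(\pi(\varphi(z)))$, to the $k$-free inequality
$$
\triangle_\Sigma f(z)\ \geq\ (n-1)\,{\rm sn}_k(t)\,\frac{{\rm sn}_\kappa'(t)}{{\rm sn}_\kappa(t)}\ +\ n\,{\rm sn}_k(t)\,\langle\nabla r_o,\vec H_\tau\rangle .
$$

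Next I would bound the $\tau$-mean-curvature term. Writing $a=|\langle\nu,\xi\rangle|$ and using Proposition \ref{propunua}(2) together with $\Vert\nu_H\Vert=\sqrt{1-a^2}$, one gets $\Vert\vec H_\tau-\vec H\Vert=\frac{2}{n}a\,\Vert\nabla\xi(\nu_H)\Vert\leq\frac{2\tau}{n}\,a\sqrt{1-a^2}\leq\frac{\tau}{n}$, the last inequality being the elementary bound $a\sqrt{1-a^2}\leq\frac12$; hence $\Vert\vec H_\tau\Vert\leq\sup_\Sigma\Vert\vec H\Vert+\frac{\tau_\Sigma}{n}$. This is precisely the origin of the correction $\tau_\Sigma/n$ in the statement. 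Combining with $\langle\nabla r_o,\vec H_\tau\rangle\geq-\Vert\vec H_\tau\Vert$ gives
$$
\triangle_\Sigma f(z)\ \geq\ n\,{\rm sn}_k(t)\left[\frac{n-1}{n}\,\frac{{\rm sn}_\kappa'(t)}{{\rm sn}_\kappa(t)}-\Big(\sup_\Sigma\Vert\vec H\Vert+\tfrac{\tau_\Sigma}{n}\Big)\right].
$$

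Finally I would apply stochastic completeness. By Theorem \ref{teopigola} there is a sequence $x_j$ with $f(x_j)\geq\sup_\Sigma f-1/j$ and $\triangle_\Sigma f(x_j)\leq1/j$. Since $F$ is strictly increasing, $\sup_\Sigma f=F(\rho)$ with $\rho:=\sup_\Sigma r_o\circ\pi\circ\varphi\leq R$, and $f(x_j)\to\sup_\Sigma f$ forces $t_j:=r_o(\pi(\varphi(x_j)))\to\rho$. Evaluating the last display at $x_j$, dividing by $n\,{\rm sn}_k(t_j)>0$, and letting $j\to\infty$ (so $1/j\to0$ and ${\rm sn}_k(t_j)\to{\rm sn}_k(\rho)>0$, using $\rho>0$ since $\dim\Sigma=n\geq2$ cannot project onto a single fibre) yields
$$
\sup_\Sigma\Vert\vec H\Vert+\frac{\tau_\Sigma}{n}\ \geq\ \frac{n-1}{n}\,\frac{{\rm sn}_\kappa'(\rho)}{{\rm sn}_\kappa(\rho)}=h_\kappa^n(\rho).
$$
As $\rho\leq R$ and $t\mapsto{\rm sn}_\kappa'(t)/{\rm sn}_\kappa(t)$ is decreasing, $h_\kappa^n(\rho)\geq h_\kappa^n(R)$, giving the claimed lower bound. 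The main obstacle is exactly the design of $F$: the comparison of Corollary \ref{corunua} inherently uses the two-sided bound $k\leq{\rm sec}\leq\kappa$, and the whole maneuver is to pick $F'={\rm sn}_k$ so that the lower-bound contributions either cancel or reduce to a non-negative multiple of $\langle\nu,\xi\rangle^2$, leaving an estimate depending only on $\kappa$; the second delicate point is the sharp constant in $\Vert\vec H_\tau-\vec H\Vert\leq\tau/n$, which is what keeps the correction at $\tau_\Sigma/n$ rather than $2\tau_\Sigma/n$.
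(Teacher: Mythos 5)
Your proposal is correct and follows essentially the same route as the paper's own proof: the same test function with $F'={\rm sn}_k$ (the paper's $F_k(t)=\int_0^t{\rm sn}_k(s)\,ds$) fed into Corollary \ref{corunua}, the same bound $\Vert\vec H_\tau\Vert\leq\Vert\vec H\Vert+\tau_\Sigma/n$ obtained from $a\sqrt{1-a^2}\leq\frac12$ (the paper phrases it as $\vert\sin(2\theta)\vert\leq 1$), and the same application of Theorem \ref{teopigola} followed by the monotonicity of ${\rm sn}_\kappa'/{\rm sn}_\kappa$. The only cosmetic differences are that the paper simplifies by choosing $k<0$ (making ${\rm sn}_k'\geq 0$ automatic, whereas you verify it for general $k\leq\kappa$ via $R<\pi/2\sqrt{\kappa}$) and leaves implicit the two small points you spell out, namely $\rho>0$ and the $C^2$-smoothness of $f$ across the fibre over $o$.
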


\begin{proof}
Since $\pi(\varphi(\Sigma))$ is bounded and contained in the geodesic ball $B_R^\mathbb{B}(o)$ for some $o\in \mathbb{B}$
$$
R_*:=\sup_{\pi(\varphi(\Sigma))} r_o\leq R<\infty,\quad {\rm with }\quad r_o(\cdot)={\rm dist}_\mathbb{B}(o,\cdot).
$$
Moreover, for any $2$-plane $\Pi_p\subset T_p\mathbb{B}$ the sectional curvatures ${\rm sec}(\Pi_p)$ of any $p\in B_R^\mathbb{B}(o)$ will be bounded as
$$
-\infty<k\leq \inf_{p\in B_R^\mathbb{B}(o)}{\rm sec}(\Pi_p)\leq {\rm sec}(\Pi_p)\leq \kappa.
$$
In order to simplify the argument of the proof let us choose $k<0$, and let us define the function
$$
\begin{aligned}
&F_k:\mathbb{R}\to\mathbb{R},\quad t\to F_k(t)=\int_0^t{\rm sn}_k(s)ds
\end{aligned}
$$
Now we are going to compute the Laplacian of $f=F_k\circ r_o\circ\pi\circ\varphi$. By using inequality (\ref{tria2}) of corollary \ref{corunua},
\begin{equation}\label{kvara}\begin{aligned}
\triangle_\Sigma f(z)\geq & {\rm sn}'_k(t(z))+(n-1){\rm sn}_k(t(z))\frac{{\rm sn}_\kappa'(t(z))}{{\rm sn}_\kappa(t(z))}-{\rm sn}_k'(t(z))\Vert d\pi(\nu)\Vert^2\\& -n\, {\rm sn}_k(t(z))\Vert \vec H_\tau(z)\Vert \end{aligned}
\end{equation}
Since $k\leq 0$, then ${\rm sn}_k'\geq 0$ and hence
\begin{equation}\label{kvina}\begin{aligned}
\triangle_\Sigma f(z)\geq & (n-1){\rm sn}_k(t(z))\frac{{\rm sn}_\kappa'(t(z))}{{\rm sn}_\kappa(t(z))}-n\, {\rm sn}_k(t(z))\Vert \vec H_\tau(z)\Vert\end{aligned}
\end{equation}

Now, we are going to apply theorem \ref{teopigola} to $f:\Sigma\to \mathbb{R}$ because since $\displaystyle\sup_{\Sigma}r_0\circ \pi=R_*$, and $F_k$ is an increasing function, $\displaystyle\sup_{\Sigma}f=\int_0^{R_*}{\rm sn}_k(s)ds<\infty$. Then, there exists a sequence $\{x_i\}$, such that
$$
f(x_i)\geq \sup_\Sigma f-\frac{1}{i}, \quad \text{and}\quad \triangle f(x_i)\leq \frac{1}{i}
$$
Therefore $t(x_i)\to R_*$ when $i\to \infty$, and by inequality (\ref{kvina}), 

\begin{equation}\label{sesa}\begin{aligned}
\frac{1}{i}\geq & (n-1){\rm sn}_k(t(x_i))\frac{{\rm sn}_\kappa'(t(x_i))}{{\rm sn}_\kappa(t(x_i))}-n\, {\rm sn}_k(t(x_i))\Vert \vec H_\tau(x_i)\Vert\end{aligned}
\end{equation}
Then,
\begin{equation}\label{sepa}\begin{aligned}
\Vert \vec H_\tau(x_i)\Vert\geq & \frac{(n-1)}{n}\frac{{\rm sn}_\kappa'(t(x_i))}{{\rm sn}_\kappa(t(x_i))}-\frac{1}{n\, {\rm sn}_k(t(x_i))i}\end{aligned}
\end{equation}
But $\Vert \vec{H}_\tau\Vert=\Vert \vec{H}+\frac{2}{n}\langle \nu,\xi\rangle \nabla\xi(\nu_H)\Vert\leq\Vert \vec{H}\Vert+\frac{2}{n}\Vert\langle \nu,\xi\rangle\Vert \Vert\nabla\xi(\nu_H)\Vert $. Hence, denoting $\theta=\arccos(\langle \nu,\xi\rangle)$ and applying proposition \ref{propunua},
\begin{equation}\label{deka}
\begin{aligned}
\Vert \vec{H}_\tau\Vert\leq&\Vert \vec{H}\Vert+\frac{2}{n}\vert    cos(\theta)\vert\, \Vert\nu_H\Vert\ \tau=\Vert \vec{H}\Vert+\frac{\tau}{n}\vert    \sin(2\theta)\vert\\
\leq& \Vert \vec{H}\Vert+\frac{\tau_\Sigma}{n}
\end{aligned}
\end{equation}
Therefore,
\begin{equation}\label{oka}\begin{aligned}
\sup_\Sigma\Vert \vec H\Vert\geq & \frac{(n-1)}{n}\frac{{\rm sn}_\kappa'(t(x_i))}{{\rm sn}_\kappa(t(x_i))}-\frac{1}{n\, {\rm sn}_k(t(x_i))i}-\frac{\tau_\Sigma}{n}
\end{aligned}
\end{equation}
Letting now $i$ tend to infinity,
\begin{equation}\label{nona}\begin{aligned}
\sup_\Sigma\Vert \vec H\Vert\geq & \frac{(n-1)}{n}\frac{{\rm sn}_\kappa'(R_*)}{{\rm sn}_\kappa(R_*)}-\frac{\tau_\Sigma}{n}\cdot\end{aligned}
\end{equation}
Since $R_*\leq  R<\pi/2\sqrt{\kappa}$, $\frac{{\rm sn}_{\kappa}'}{{\rm sn}_{\kappa}}$ is a decreasing function and therefore
\begin{equation}\label{dekunua}\begin{aligned}
\sup_\Sigma\Vert \vec H\Vert\geq & \frac{(n-1)}{n}\frac{{\rm sn}_\kappa'(R)}{{\rm sn}_\kappa(R)}-\frac{\tau_\Sigma}{n}\cdot\end{aligned}
\end{equation}
Finally the theorem follows by taking into account that $\frac{(n-1)}{n}\frac{{\rm sn}_\kappa'(R)}{{\rm sn}_\kappa(R)}$ is the norm of the mean curvature of the generalized cylinder $\partial B_R^{\mathbb{M}^n(\kappa)}\times\mathbb{R}$ in $\mathbb{M}^n(\kappa)\times\mathbb{R}$.
\end{proof}
\begin{remark}
In the proof of theorem \ref{maintheo} we have used in inequality (\ref{oka}) that $\tau\leq \tau_\Sigma$ and $\vert \sin(2\theta)\vert\leq 1$.  We could use instead the following factors to improve the result
$$
\begin{aligned}
&\tau^*:=\lim_{\rho\to R_*}\sup\left\{\tau(x)\,:\,x\in\Sigma\setminus \pi^{-1}(B_\rho(p))\right\}\\
& \alpha:=\lim_{\rho\to R_*}\sup\left\{\vert \sin(2\theta(x))\vert \,:\,x\in\Sigma\setminus \pi^{-1}(B_\rho(p))\right\}
\end{aligned}
$$
and the inequality would be
\begin{equation}\begin{aligned}
\sup_\Sigma\Vert \vec H\Vert\geq & \frac{(n-1)}{n}\frac{{\rm sn}_\kappa'(R)}{{\rm sn}_\kappa(R)}-\frac{\tau^*\alpha}{n}\cdot\end{aligned}
\end{equation}
 \end{remark}

\section{Proof of theorem \ref{teo2}}\label{sec5}

In the statement of theorem \ref{teo2} it is assumed that the Killing submersion $\pi:M\to \mathbb{B}$ admits a smooth section $s:\overline{B_R^\mathbb{B}(o)}\to M$ and is assumed as well that the normal exponential map $$\exp:s(B_R^\mathbb{B}(o))\times \mathbb{R}\to \pi^{-1}((B_R^\mathbb{B}(o))), \quad (p,z)\to \exp(p,z)=\exp_p(z\xi)$$ is a diffeomorphism.

Given the section $s:\overline{B_R^\mathbb{B}(o)}\to M$ we can trivialize $\pi^{-1}(B_R^\mathbb{B}(o))\approx s(B_R^\mathbb{B}(o))\times \mathbb{R}$ using the following map $T:\pi^{-1}(B_R^\mathbb{B}(o))\approx s(B_R^\mathbb{B}(o))\times \mathbb{R}$ given by
$$
q\in \pi^{-1}(B_R^\mathbb{B}(o))\to T(q)=(p(q),z(q))
$$
where $p$ and $z$ are the following two functions 
\begin{equation}\label{nova24}
\begin{aligned}
&p:\pi^{-1}(B_R^\mathbb{B}(o))\to s(B_R^\mathbb{B}(o)),\quad p(q):=s(\pi(q))\\
&z:\pi^{-1}(B_R^\mathbb{B}(o))\to \mathbb{R},\quad q=\exp_{p(q)}(z(q)\xi)
\end{aligned}
\end{equation}
Observe that we can define the $z$ function by an implicit equation because the hypothesis on the injectivity of normal exponential map. Observe moreover that
$$
z(q)\geq {\rm dist}^M(q,s(B_R^\mathbb{B}(o))
$$
because the curve $t\to \gamma(t)=\exp_{p(q)}(t\xi)$ is a geodesic joining $\gamma(0)=p(q)\in s(B_R^\mathbb{B}(o))$ with $q=\gamma(z(q))$. 
We can can moreover reverse the $T$  map 
$$
T^{-1}:s(B_R^\mathbb{B}(o))\times\mathbb{R}\to\pi^{-1}(B_R^\mathbb{B}(o)),\quad (p,z)\to T^{-1}(p,z)=\exp_{p}(z\xi).
$$
We will need furthermore the expression of the gradient of the $z$ function which is given in the following lemma.
\begin{lemma}Let $\pi: M\to \mathbb{B}$ a Killing submersion with unit-length Killing vector field $\xi$. Suppose that the Killing submersion $\pi:M\to \mathbb{B}(o)$ admits a smooth section $s:\overline{B_R^\mathbb{B}(o)}\to M$ and that the normal exponential map $$\exp:s(B_R^\mathbb{B}(o))\times \mathbb{R}\to \pi^{-1}(B_R^\mathbb{B}(o)), \quad (p,z)\to \exp(p,z)=\exp_p(z\xi)$$ is a diffeomorphism. Then the gradient of the map $z:\pi^{-1}(B_R^\mathbb{B}(o))\to \mathbb{R}$ defined in equation (\ref{nova24})
is given by $$\nabla z=\xi.$$\end{lemma}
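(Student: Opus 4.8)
The plan is to identify the function $z$ geometrically and then compute its gradient by splitting it into vertical and horizontal parts. First I would record that, since $\xi$ is a unit Killing field with $\nabla_\xi\xi=0$ (equation (\ref{eq2})), each fiber $\pi^{-1}(x)$ is a unit-speed geodesic and the ray $t\mapsto\exp_p(t\xi)$ is precisely the integral curve of $\xi$ through $p$: both are geodesics with initial velocity $\xi(p)$, so they agree. Consequently $z(q)$ is nothing but the signed arc length along the fiber through $q$, measured from the section point $p(q)=s(\pi(q))$, which is exactly what the implicit definition (\ref{nova24}) encodes (and explains the inequality $z(q)\ge{\rm dist}^M(q,s(B_R^\mathbb{B}(o)))$ noted above).

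For the vertical component I would use the flow $\Phi_t$ of $\xi$. Because $\Phi_t$ carries each fiber to itself and satisfies $\pi\circ\Phi_t=\pi$, the section point is unchanged along the flow, so $\Phi_t(q)=\Phi_{t+z(q)}(p(q))=\exp_{p(q)}((z(q)+t)\xi)$; injectivity of the normal exponential map then gives the cocycle identity $z\circ\Phi_t=z+t$. Differentiating at $t=0$ yields $\xi(z)=\langle\nabla z,\xi\rangle=1$. Since the fibers are one-dimensional, the vertical distribution is spanned by $\xi$, and hence the vertical part of $\nabla z$ is exactly $\xi$; it remains only to show that the horizontal part of $\nabla z$ vanishes.

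For the horizontal part I would argue through the level hypersurfaces $L_c=\{z=c\}$. By the cocycle identity these satisfy $L_c=\Phi_c(L_0)$, with $L_0=s(B_R^\mathbb{B}(o))$ the image of the section. Since $\xi$ is Killing, each $\Phi_c$ is a local isometry, and it preserves $\xi$; therefore the angle that $L_c$ makes with the fibers is independent of $c$ and equals the angle made by the section $L_0$. As $\nabla z$ is normal to every $L_c$, the identity $\nabla z=\xi$ is equivalent to each $L_c$ being orthogonal to $\xi$, which in turn reduces to the single statement that the section $s(B_R^\mathbb{B}(o))$ meets the fibers orthogonally.

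This last reduction is where I expect the real difficulty to sit, and it is the main obstacle. The clean identity $\nabla z=\xi$ holds precisely in the Gauss-lemma situation of a \emph{horizontal} section, i.e. when $ds(T\mathbb{B})\perp\xi$; if the section is not orthogonal to the fibers, then $\nabla z$ acquires a nonzero horizontal correction $\nabla^H z$, and only its vertical part equals $\xi$. The decisive step is therefore to either extract this orthogonality from the standing hypotheses or, failing that, to replace the bald identity by the two usable facts $\langle\nabla z,\xi\rangle=1$ together with a uniform bound on $\|\nabla^H z\|$ (which remains finite because the projection of the immersion is confined to $B_R^\mathbb{B}(o)$, so $\tau$ is bounded there). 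Controlling that horizontal correction, rather than verifying the vertical normalization, is the crux of the lemma.
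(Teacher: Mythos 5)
Your diagnosis is correct, and the obstacle you isolated is genuine: the identity $\nabla z=\xi$ does \emph{not} follow from the stated hypotheses, because it is false unless the section is horizontal, $ds(T\mathbb{B})\perp\xi$. The paper's own application furnishes a counterexample: in the model of $\mathbb{E}(\kappa,\tau)$ with $\tau\neq 0$ and the section $s(x,y)=(x,y,0)$, one has $\exp((x,y,0),t)=(x,y,t)$, so $z$ is the third coordinate function, and computing in the orthonormal frame $\{E_1,E_2,E_3\}$,
$$
\nabla z=dz(E_1)E_1+dz(E_2)E_2+dz(E_3)E_3=-\tau y\,E_1+\tau x\,E_2+E_3\neq E_3=\xi
$$
away from the axis $x=y=0$, even though all hypotheses of the lemma (smooth section over $\overline{B_R^{\mathbb{B}}(o)}$, the map $(p,z)\mapsto\exp_p(z\xi)$ a diffeomorphism) hold. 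An even simpler example is $\pi:\mathbb{R}^2\to\mathbb{R}$, $\xi=\partial_y$, with the tilted section $s(x)=(x,g(x))$: then $z=y-g(x)$ and $\nabla z=\partial_y-g'(x)\,\partial_x$. What is true in general is exactly what you proved: the normalization $\langle\nabla z,\xi\rangle=1$ (your flow/cocycle argument), and the fact that $\nabla z$ is normal to the translated sections $\Phi_c(s(B_R^{\mathbb{B}}(o)))$, so that $\nabla z=\xi$ holds if and only if the section meets the fibers orthogonally.

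For comparison, the paper's proof works with the trivialization $T(q)=(p(q),z(q))$ and correctly establishes two facts that mirror yours: (i) $dz$ annihilates $dT^{-1}\bigl(T_{p(q)}s(B_R^{\mathbb{B}}(o))\bigr)$, and (ii) $dz(\xi)=1$. The error is the intermediate sentence ``This implies that $\nabla z(q)\in dT^{-1}(T_{z(q)}\mathbb{R})$'': orthogonality to the $n$-dimensional subspace $dT^{-1}\bigl(T_{p(q)}s(\cdot)\bigr)$ places $\nabla z$ in its one-dimensional \emph{orthogonal} complement, which coincides with the algebraic complement $dT^{-1}(T_{z(q)}\mathbb{R})={\rm span}\{\xi\}$ only when $\xi\perp dT^{-1}\bigl(T_{p(q)}s(\cdot)\bigr)$ --- precisely the horizontality you declined to assume. (The phrase ``normal exponential map'' suggests horizontality of $s$ was tacitly intended, but it is neither stated nor satisfied by the section used for $\mathbb{E}(\kappa,\tau)$ with $\tau\neq0$.) So your proposal fails to close only because the statement itself is unprovable as written; the repair is either to add horizontality of $s$ as a hypothesis (after which your Gauss-lemma argument, or the paper's, is complete), or, as you suggest, to rerun Proposition \ref{novaprop5.2} using only $\langle\nabla z,\xi\rangle=1$ together with $\sup_{\pi^{-1}(B_R^{\mathbb{B}}(o))}\Vert\nabla z\Vert<\infty$ --- a bound that comes from compactness of $\overline{B_R^{\mathbb{B}}(o)}$ and transversality of the section to the fibers (in the $\mathbb{E}(\kappa,\tau)$ example it equals $\sqrt{1+\tau^2(x^2+y^2)}$), rather than from $\tau$ alone --- at the cost of also revisiting the Hessian computation there, since ${\rm Hess}_M z$ no longer vanishes identically.
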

\begin{proof}
For any $q\in \pi^{-1}(B_R^\mathbb{B}(o))$, 
$$
T_qM=dT^{-1}(T_{p(q)}s(B_R^{\mathbb{B}}(o)))+dT^{-1}(T_{z(q)}\mathbb{R})
$$ 
Observe moreover that for any $v \in T_{p(q)}s(B_R^{\mathbb{B}}(o))$,
$$
dz(dT^{-1}(v))=\frac{d}{dt}z(\exp_{\gamma(t)}(z(q)\xi))\vert_{t=0}=\frac{d}{dt}z(q)=0
$$
for any curve $\gamma:(-\epsilon,\epsilon)\to s(B_R^\mathbb{B}(o))$ with $\gamma(0)=p$ and $\dot\gamma(0)=v$. Then,
$$
dz(dT^{-1}(v))=\langle\nabla z,dT^{-1}(v)\rangle=0.
$$
This implies that 
$
\nabla z(q)\in dT^{-1}(T_{z(q)}\mathbb{R}).
$
Then
$$
\nabla z(q)=\alpha(q)\xi(q)
$$
but taking now the curve $\gamma_\xi(t)=\exp_{p(q)}((z(q)+t)\xi)$ which is the integral curve of $\xi$ with $\gamma_\xi(o)=q$ and $\dot\gamma_\xi(0)=\xi(q)$,
$$
\frac{d}{dt}z(\gamma_\xi(t))\vert_{t=0}=\frac{d}{dt}t\vert_{t=0}=1=dz(\xi)=\langle\nabla z(q),\xi(q)\rangle
$$
we conclude that
$$
\nabla z(q)=\xi(q).
$$\end{proof}
 
 \begin{proposition}\label{novaprop5.2}
  Let $\Sigma$ be a complete and non-compact Riemannian manifold. Let $\varphi:\Sigma\to M$ be a proper isometric immersion. Suppose that $M$ admits a Killing submersion $\pi:M\to \mathbb{B}$ with unit-length Killing vector field, suppose moreover that $\varphi(\Sigma)\subset \pi^{-1}(B_R^{\mathbb{B}}(o))$ for some geodesic ball $B_R^{\mathbb{B}}(o)$ of radius $R$ centered at $o\in \mathbb{B}$. Assume that the sectional curvatures are bounded ${\rm sec }\leq \kappa$ in $B_R^{\mathbb{B}}(o)$ and that
$$
R<\min\left\{{\rm inj}(o),\,\frac{\pi}{2\sqrt{\kappa}}\right\}
$$
where ${\rm inj}(o)$ is the injectivity radius of $o$ and we replace $\pi/2\sqrt{\kappa}$ by $+\infty$ if $\kappa<0$.  Suppose moreover, that $\pi$ admits a smooth section $s:\overline{B_R^\mathbb{B}(o)}\to M$ and the normal exponential map
$$
\exp: s(B_R^\mathbb{B}(o))\times \mathbb{R}\to \pi^{-1}((B_R^\mathbb{B}(o))),\quad (p,z)\to \exp(p,z):=\exp_p(z\xi)
$$
is a diffeomorphism. If moreover,
$$
\sup_{\Sigma}\Vert\vec{H}(p)\Vert<\infty
$$
Then, the Omori-Yau principle for the Laplacian holds in $\Sigma$.
\end{proposition}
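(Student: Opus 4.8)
The plan is to verify the three hypotheses of Theorem~\ref{AliasTeo} for a concrete exhaustion function manufactured from the fiber coordinate. Set $u:=z\circ\varphi:\Sigma\to\mathbb{R}$, with $z$ the function of equation~(\ref{nova24}), and take $f:=u^{2}$. By the preceding Lemma, $\nabla z=\xi$ on $\pi^{-1}(B_R^{\mathbb{B}}(o))$, and since $\|\xi\|=1$ the tangential component satisfies $\|\nabla^{\Sigma}u\|\le\|\nabla^{M}z\|=1$; consequently $\|\nabla f\|=2|u|\,\|\nabla^{\Sigma}u\|\le 2\sqrt{f}$, which already controls the gradient of $f$.

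First I would establish that $f$ is an exhaustion, that is $f(x)\to\infty$ as $x\to\infty$ in $\Sigma$; this is the step where properness is indispensable. Because $\pi(\varphi(\Sigma))$ is bounded and the trivialization $T$ of Section~\ref{sec5} is a diffeomorphism, for each $C>0$ the points of $\varphi(\Sigma)$ with $|z|\le C$ lie, via $T$, in the compact slab $T^{-1}\big(s(\overline{B_R^{\mathbb{B}}(o)})\times[-C,C]\big)$. Any divergent sequence in $\Sigma$ leaves the compact set $\{|u|\le C\}=\varphi^{-1}\big(\{|z|\le C\}\cap\varphi(\Sigma)\big)$ (compact by properness), and since the base coordinate of $\varphi(x)$ stays bounded the only way to escape is to have $|u(x)|>C$. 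As $C$ is arbitrary, $|u(x)|\to\infty$, hence $f\to\infty$ at infinity.

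Next I would bound $\triangle_{\Sigma}f$ from above using the general relation $\triangle_{\Sigma}(h\circ\varphi)=\triangle_{M}h-{\rm Hess}_{M}h(\nu,\nu)+n\langle\nabla h,\vec H\rangle$, established in Section~\ref{prelimsec}, applied to $h=z$. Since $\nabla z=\xi$, the skew-adjointness~(\ref{eq1}) of $\nabla\xi$ forces both ${\rm Hess}_{M}z(\nu,\nu)=\langle\nabla_{\nu}\xi,\nu\rangle=0$ and $\triangle_{M}z={\rm tr}(\nabla\xi)=0$ to vanish, leaving $\triangle_{\Sigma}u=n\langle\xi,\vec H\rangle$. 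The standing hypothesis $H_0:=\sup_{\Sigma}\|\vec H\|<\infty$ then yields $|\triangle_{\Sigma}u|\le nH_0$, whence
\[
\triangle_{\Sigma}f=2u\,\triangle_{\Sigma}u+2\|\nabla^{\Sigma}u\|^{2}\le 2nH_0\,|u|+2\le 2nH_0\sqrt{f}+2 .
\]

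To finish I would invoke Theorem~\ref{AliasTeo} with $G(t):=A(1+\sqrt{t})$ and $A:=\max\{2,\,2nH_0\}$. Indeed $\|\nabla f\|\le 2\sqrt{f}\le G(f)$ and $\triangle_{\Sigma}f\le 2nH_0\sqrt{f}+2\le G(f)$ hold everywhere, $G\in C^{1}(\mathbb{R}_+)$ is positive near infinity, $1/G\notin L^{1}(+\infty)$ because $\int^{\infty}dt/\sqrt{t}=\infty$, and $G'(t)=A/(2\sqrt{t})\ge 0\ge-A(\log t+1)$; hence the Omori--Yau maximum principle for the Laplacian holds on $\Sigma$. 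The one genuinely delicate point is the exhaustion step: it is precisely there that properness replaces stochastic completeness, and the argument hinges on the boundedness of $\pi(\varphi(\Sigma))$ together with the diffeomorphism hypothesis on the normal exponential map, which together force divergence in $\Sigma$ to be detected entirely by the fiber coordinate $z$.
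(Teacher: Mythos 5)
Your proof is correct and follows the same skeleton as the paper's: both hinge on the lemma $\nabla z=\xi$, on the resulting identity $\triangle_\Sigma(z\circ\varphi)=n\langle\xi,\vec H\rangle$ (the diagonal terms of ${\rm Hess}_M\,z$ vanish by the skew-adjointness (\ref{eq1})), and on an application of Theorem \ref{AliasTeo}. Where you differ is in the packaging, and your packaging buys real robustness. The paper applies Theorem \ref{AliasTeo} to $f=z\circ\varphi$ itself, with $G(t)=\sqrt{t^2+1}$, and obtains the exhaustion property from properness together with the inequality $z(q)\geq{\rm dist}^M\bigl(q,s(B_R^{\mathbb{B}}(o))\bigr)$; that inequality (and hence the conclusion $f\to+\infty$ rather than merely $|f|\to\infty$) tacitly assumes $z\geq0$ along $\varphi(\Sigma)$, since the geodesic $t\mapsto\exp_{p(q)}(t\xi)$ joining $p(q)$ to $q$ has length $|z(q)|$, not $z(q)$. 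If the immersion diverges toward negative values of $z$, the paper's test function is not an exhaustion, whereas your choice $f=(z\circ\varphi)^2$ is indifferent to the sign of $z$, at the modest cost of the extra term $2\|\nabla^\Sigma u\|^2\leq2$ in the Laplacian and the adjusted $G(t)=A(1+\sqrt{t})$, whose hypotheses ($1/G\notin L^1(+\infty)$, the condition on $G'$) you verify correctly. Your exhaustion step is also topological (compact sublevel sets via properness) where the paper's is metric (triangle inequality with the extremal points $p_*$, $p^*$ on the compact set $s(\overline{B_R^\mathbb{B}(o)})$). One imprecision you should repair: $T^{-1}$ is a diffeomorphism only on $s(B_R^\mathbb{B}(o))\times\mathbb{R}$, so the ``compact slab'' $T^{-1}\bigl(s(\overline{B_R^\mathbb{B}(o)})\times[-C,C]\bigr)$ is not literally defined. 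Argue instead that every $q\in\varphi(\Sigma)$ with $|z(q)|\leq C$ lies within distance $C$ of the compact set $s(\overline{B_R^\mathbb{B}(o)})$ (travel along the fiber geodesic), so $\{|z|\leq C\}\cap\varphi(\Sigma)$ is bounded and has compact closure in $M$ provided closed bounded subsets of $M$ are compact; properness then makes $\{|u|\leq C\}$ compact in $\Sigma$, which is what your argument needs. This implicit Hopf--Rinow-type assumption on $M$ is of exactly the same nature as the one the paper makes when it asserts that properness converts ${\rm dist}^\Sigma(x,x_0)\to\infty$ into ${\rm dist}^M(\varphi(x),\varphi(x_0))\to\infty$, so it is not a gap relative to the paper's own standard of rigor.
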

 \begin{proof}

In order to prove that the Omori-Yau principle for the Laplacian holds in $\Sigma$ we are using theorem \ref{AliasTeo} with the function $f=z\circ \varphi$.  Hence, we only need to prove that the function $f=z\circ \varphi$ when $\varphi$ is proper, and the hypothesis of the proposition are fulfilled, satisfies 
\begin{enumerate}
\item $f(x)\to \infty$ when $x\to \infty$.
\item $\vert \nabla f\vert\leq G(f)$.
\item $\triangle_\Sigma f\leq G(f)$ outside of a compact set.
\end{enumerate}
with $G(t)=\sqrt{t^2+1}$.

Observe that since the immersion is proper, and $\Sigma$ is a complete and non-compact manifold, if ${\rm dist}^\Sigma(x,x_0)\to\infty$ then 
${\rm dist}^M(\varphi(x),\varphi(x_0))\to\infty$. But for any $p\in s(B_R^\mathbb{B}(o))$
$$
{\rm dist}^M(\varphi(x),\varphi(x_0))\leq {\rm dist}^M(\varphi(x),p)+{\rm dist}^M(p,\varphi(x_0))
$$
Therefore, since $\overline{B_R^\mathbb{B}(o)}$ is compact there exists $p_*\in s(\overline{B_R^\mathbb{B}(o)})$ such that the distance is minimized at $p_*$, \emph{i.e.}, ${\rm dist}^M(\varphi(x),p_*)={\rm dist}^M(\varphi(x),s(\overline{B_R^\mathbb{B}(o)}))$ and an other $p^*\in s(\overline{B_R^\mathbb{B}(o)})$ such that ${\rm dist}^M(p,\varphi(x_0))\leq {\rm dist}^M(p^*,\varphi(x_0))$ for any $p\in s(\overline{B_R^\mathbb{B}})$.  Hence,
$$
\begin{aligned}
{\rm dist}^M(\varphi(x),\varphi(x_0))\leq &{\rm dist}^M(\varphi(x),s(\overline{B_R^\mathbb{B}}))+{\rm dist}^M(p^*,\varphi(x_0))\\
\leq &z\circ\varphi(x)+{\rm dist}^M(p^*,\varphi(x_0))\\
\leq & f(x)+{\rm dist}^M(p^*,\varphi(x_0))
\end{aligned}
$$
 
then, $f(x)\to \infty$ when $x\to\infty$ as  it was stated.
On the other hand,
$$
\langle \nabla f,v\rangle=\langle \nabla z,d\varphi(v)\rangle
$$
then,
$$
\Vert \nabla f\Vert^2\leq \langle \nabla f,\nabla f\rangle=\langle \nabla z,d\varphi(\nabla f)\rangle=\Vert \nabla^Tz\Vert^2\leq \Vert \nabla z\Vert^2\leq \Vert \xi\Vert^2=1\leq G(f(z))
$$
Now we are going to compute $\triangle f$. By equation (\ref{novaeq10}), for any orthonormal basis $\{E_i\}$ of $T_p\Sigma$
$$
\begin{aligned}
{\rm Hess}_{\Sigma}f(E_i,E_i)=&{\rm Hess}_{M}z(d\varphi(E_i),d\varphi(E_i))+\langle \nabla z,II_p(E_i,E_i)\rangle\\
=&\langle \nabla_{d\varphi(E_i)}\nabla z,d\varphi(E_i)\rangle+\langle \nabla z,II_p(E_i,E_i)\rangle\\
=&\langle \nabla_{d\varphi(E_i)}\xi,d\varphi(E_i)\rangle+\langle \xi,II_p(E_i,E_i)\rangle\\
=&\langle \xi,II_p(E_i,E_i)\rangle
\end{aligned}
$$ 
Then,
$$
\triangle_\Sigma f=\sum_{i=1}^n{\rm Hess}_\Sigma f(E_i,E_i)=n\langle \xi,\vec{H}(p)\rangle
$$
And hence,
$$
\triangle_\Sigma f\leq n\sup_{\Sigma}\Vert\vec{H}(p)\Vert
$$
Then if $\sup_{\Sigma}\Vert\vec{H}(p)\Vert<\infty$, since $f(x)\to \infty$ when $x\to\infty$,
$$
\triangle_\Sigma f\leq G(f)
$$
outside of a compact set, and the proposition follows.
 \end{proof}
 The statement of theorem \ref{teo2} and its proof is as follows
 \begin{theorem*}
 Let $\Sigma$ be a complete and non-compact Riemannian manifold. Let $\varphi:\Sigma\to M$ be a proper isometric immersion. Suppose that $M$ admits a Killing submersion $\pi:M\to \mathbb{B}$ with unit-length Killing vector field, suppose moreover that $\varphi(\Sigma)\subset \pi^{-1}(B_R^{\mathbb{B}}(p))$ for some geodesic ball $B_R^{\mathbb{B}}(o)$ of radius $R$ centered at $o\in \mathbb{B}$. Assume that the sectional curvatures are bounded ${\rm sec }\leq \kappa$ in $B_R^{\mathbb{B}}(o)$ and that
$$
R<\min\left\{{\rm inj}(o),\,\frac{\pi}{2\sqrt{\kappa}}\right\}
$$
where ${\rm inj}(o)$ is the injectivity radius of $o$ and we replace $\pi/2\sqrt{\kappa}$ by $+\infty$ if $\kappa<0$.  Suppose moreover, that $\pi$ admits a smooth section $s:\overline{B_R^\mathbb{B}(o)}\to M$ and the normal exponential map
$$
\exp: s(B_R^\mathbb{B}(o))\times \mathbb{R}\to \pi^{-1}((B_R^\mathbb{B}(o))),\quad (p,z)\to \exp(p,z):=\exp_p(z\xi)
$$
is a diffeomorphism.  Then,  the supremum of the norm of the mean curvature vector field of $\Sigma$ satisfies 
$$
\sup_\Sigma\Vert \vec H\Vert\geq h_{\kappa}^n(R)
$$
where $h_{\kappa}^n(R)$ is the norm of the mean curvature of the generalized cylinder $\partial B^{\mathbb{M}^n(\kappa)}_R\times \mathbb{R}$ in $\mathbb{M}^n(\kappa)\times \mathbb{R}$.
 \end{theorem*}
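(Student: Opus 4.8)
The plan is to rerun the argument of Theorem~\ref{maintheo} verbatim, but with the weak maximum principle coming from stochastic completeness replaced by the full Omori--Yau maximum principle, whose extra control on the gradient is exactly what removes the $\tau_\Sigma/n$ correction. First I would dispose of the trivial case: if $\sup_\Sigma\Vert\vec H\Vert=\infty$ there is nothing to prove, so I may assume $\sup_\Sigma\Vert\vec H\Vert<\infty$. Under this hypothesis Proposition~\ref{novaprop5.2} guarantees that the Omori--Yau maximum principle for the Laplacian holds on $\Sigma$; this is the only place where the section and normal-exponential hypotheses (and properness) are used. From here on Omori--Yau is a property of $\Sigma$ that I may apply to any bounded $C^2$ function.

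Next I would apply Omori--Yau to the same test function used in Theorem~\ref{maintheo}, namely $f=F_k\circ r_o\circ\pi\circ\varphi$ with $F_k(t)=\int_0^t {\rm sn}_k(s)\,ds$ and $k<0$ a lower curvature bound on the relatively compact ball $\overline{B_R^\mathbb{B}(o)}$. Since the projection is bounded, $\sup_\Sigma f=F_k(R_*)<\infty$ with $R_*:=\sup_{\pi(\varphi(\Sigma))}r_o\le R$. Omori--Yau produces a sequence $\{x_i\}$ with $f(x_i)\to\sup_\Sigma f$, $\Vert\nabla f(x_i)\Vert<1/i$ and $\triangle_\Sigma f(x_i)<1/i$; as in the main theorem $t_i:=r_o(\pi(\varphi(x_i)))\to R_*$. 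Feeding the Laplacian bound into inequality (\ref{kvina}), which holds verbatim for this same $f$ (it follows from Corollary~\ref{corunua} with $k\le0$), gives
\[
\Vert\vec H_\tau(x_i)\Vert\ \ge\ \frac{n-1}{n}\,\frac{{\rm sn}_\kappa'(t_i)}{{\rm sn}_\kappa(t_i)}-\frac{1}{n\,{\rm sn}_k(t_i)\,i},
\]
exactly the estimate obtained in Theorem~\ref{maintheo}.

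The decisive new step, and the one I expect to be the main obstacle, is to exploit the gradient information to show $\langle\nu,\xi\rangle(x_i)\to0$, so that $\vec H_\tau$ collapses onto $\vec H$ along the sequence. Writing $W$ for the horizontal lift of $\nabla^\mathbb{B} r_o$ (a unit horizontal field where $r_o$ is smooth, which holds near the $x_i$ since $0<R_*\le R<{\rm inj}(o)$), one has $\nabla^M(r_o\circ\pi)=W$ and hence $\nabla^\Sigma f={\rm sn}_k(t)\,W^{T}$, the tangential projection. Thus $\Vert\nabla f(x_i)\Vert={\rm sn}_k(t_i)\Vert W^{T}(x_i)\Vert<1/i$ forces $\Vert W^{T}(x_i)\Vert\to0$ because ${\rm sn}_k(t_i)\to{\rm sn}_k(R_*)>0$. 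Since $W$ is a unit vector, $\langle W,\nu\rangle^2(x_i)=1-\Vert W^{T}(x_i)\Vert^2\to1$; as $W$ is horizontal, $\langle W,\nu\rangle=\langle W,\nu_H\rangle$, and Cauchy--Schwarz gives $\Vert\nu_H\Vert\ge|\langle W,\nu\rangle|\to1$, whence $\langle\nu,\xi\rangle^2(x_i)=1-\Vert\nu_H(x_i)\Vert^2\to0$. Geometrically: the Omori--Yau gradient control forces the unit normal to become horizontal in the limit, which is precisely the information that stochastic completeness alone does not supply.

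Finally I would combine the two threads. By Proposition~\ref{propunua} and definition (\ref{bundlemeancurvature}) of $\vec H_\tau$,
\[
\Vert\vec H(x_i)\Vert\ \ge\ \Vert\vec H_\tau(x_i)\Vert-\frac{2}{n}\,|\langle\nu,\xi\rangle(x_i)|\,\tau(x_i)\,\Vert\nu_H(x_i)\Vert\ \ge\ \Vert\vec H_\tau(x_i)\Vert-\frac{2\tau_\Sigma}{n}\,|\langle\nu,\xi\rangle(x_i)|.
\]
Combining this with the displayed lower bound for $\Vert\vec H_\tau(x_i)\Vert$ and letting $i\to\infty$, both error terms $\tfrac{1}{n\,{\rm sn}_k(t_i)i}$ and $\tfrac{2\tau_\Sigma}{n}|\langle\nu,\xi\rangle(x_i)|$ vanish, giving $\sup_\Sigma\Vert\vec H\Vert\ge\frac{n-1}{n}\frac{{\rm sn}_\kappa'(R_*)}{{\rm sn}_\kappa(R_*)}$. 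Since $R_*\le R<\pi/2\sqrt\kappa$ and ${\rm sn}_\kappa'/{\rm sn}_\kappa$ is decreasing there, this is at least $\frac{n-1}{n}\frac{{\rm sn}_\kappa'(R)}{{\rm sn}_\kappa(R)}=h_\kappa^n(R)$, as claimed. The degenerate possibility $R_*=0$, which would force $\varphi(\Sigma)$ into a single one-dimensional fiber, cannot occur for a hypersurface with $n\ge2$.
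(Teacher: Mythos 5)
Your proof is correct, and it shares the paper's skeleton---reduction to the case $\sup_\Sigma\Vert\vec H\Vert<\infty$, Proposition \ref{novaprop5.2} to obtain the Omori--Yau principle, the same test function $f=F_k\circ r_o\circ\pi\circ\varphi$, and the monotonicity of ${\rm sn}_\kappa'/{\rm sn}_\kappa$ at the end---but the decisive step, eliminating the $\tau$-correction, is implemented by a genuinely different mechanism. The paper expands $n\langle\nabla f,\vec H_\tau\rangle$ inside the differential inequality of Corollary \ref{corunua} and bounds the extra term $2\langle\nabla f,\langle \nu,\xi\rangle \nabla\xi(\nu_H)\rangle$ by a multiple of $\Vert\nabla^\Sigma f\Vert$, which the Omori--Yau sequence sends to zero; this is legitimate only because $\nabla\xi(\nu_H)$ is tangent to $\Sigma$ (Killing skew-symmetry makes it orthogonal to $\nu_H$ and to $\xi$, hence to $\nu$), so pairing it with the ambient gradient ${\rm sn}_k(t)W$---whose norm does \emph{not} decay---sees only the tangential gradient; the paper leaves this tangency, and the correct constant (which should involve $\tau_\Sigma$), implicit. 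You instead keep $\vec H_\tau$ intact, reproduce inequality (\ref{sepa}) verbatim, and spend the gradient decay on proving $\langle\nu,\xi\rangle(x_i)\to 0$: from $\Vert\nabla^\Sigma f(x_i)\Vert={\rm sn}_k(t_i)\Vert W^{T}(x_i)\Vert\to 0$ and ${\rm sn}_k(R_*)>0$ you get $\Vert W^{T}(x_i)\Vert\to0$, hence $\vert\langle W,\nu\rangle(x_i)\vert\to1$, hence $\Vert\nu_H(x_i)\Vert\to1$ and $\langle\nu,\xi\rangle(x_i)\to0$, after which Proposition \ref{propunua} and definition (\ref{bundlemeancurvature}) give $\Vert\vec H-\vec H_\tau\Vert(x_i)\le\frac{2\tau_\Sigma}{n}\vert\langle\nu,\xi\rangle(x_i)\vert\to0$. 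Both routes consume exactly the same Omori--Yau data; yours buys a fully explicit argument plus a geometric fact the paper never isolates (along the maximizing sequence the unit normal becomes horizontal and aligns with the lift of $\nabla r_o$), and it also disposes of the degenerate case $R_*=0$, which the paper silently ignores, whereas the paper's route is shorter on the page precisely because the tangency of $\nabla\xi(\nu_H)$ to $\Sigma$---the fact that lets one replace the ambient gradient by the intrinsic one---is used without comment.
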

 \begin{proof}
 Likewise to the proof of theorem \ref{maintheo}, we are using  the test function  $f=F_k\circ r_o\circ\pi\circ\varphi$ with
$$
\begin{aligned}
&F_k:\mathbb{R}\to\mathbb{R},\quad t\to F_k(t)=\int_0^t{\rm sn}_k(s)ds
\end{aligned}
$$
Let us setting,
$$
R^*:=\sup_{\Sigma}r_o(\varphi(\Sigma))<\infty,
$$
then, 
$$
\sup_\Sigma f=\int_0^{R^*}{\rm sn}_k(s)ds<\infty.
$$
By using inequality (\ref{tria2}) of corollary \ref{corunua}
\begin{equation}\begin{aligned}
\triangle_\Sigma f(z)\geq & {\rm sn}_k'(t)\left(1-\Vert d\pi(\nu)\Vert^2\right)+(n-1){\rm sn}_k(t)\frac{{\rm sn}_\kappa'(t)}{{\rm sn}_\kappa(t)}\\ & +n\langle \nabla f,\vec H_\tau\rangle\end{aligned}
\end{equation}  
Since ${\rm sn}'_k(t)\geq 0$, 
\begin{equation}\begin{aligned}
\triangle_\Sigma f(z)\geq & (n-1){\rm sn}_k(t)\frac{{\rm sn}_\kappa'(t)}{{\rm sn}_\kappa(t)} +n\langle \nabla f,\vec H_\tau\rangle\\
=&(n-1){\rm sn}_k(t)\frac{{\rm sn}_\kappa'(t)}{{\rm sn}_\kappa(t)}+\langle\nabla f,\vec H\rangle+\langle\nabla f,\frac{2}{n}\langle \nu,\xi\rangle \nabla\xi(\nu_H)\rangle\\
\geq & (n-1){\rm sn}_k(t)\frac{{\rm sn}_\kappa'(t)}{{\rm sn}_\kappa(t)}+{\rm sn}_k(t)\langle\nabla r_o,\vec H\rangle-\frac{2}{n}\Vert \nabla f\Vert\\
\geq & (n-1){\rm sn}_k(t)\frac{{\rm sn}_\kappa'(t)}{{\rm sn}_\kappa(t)}-{\rm sn}_k(t)\sup_\Sigma\Vert\vec H\Vert-\frac{2}{n}\Vert \nabla f\Vert\\
\end{aligned}
\end{equation} 
Now we can assume that $\sup_\Sigma\Vert \vec{H}\Vert<\infty$, (otherwise, there is nothing to be proved), and hence by using proposition \ref{novaprop5.2}, $\Sigma$ satisfies the Omori-Yau maximum principle for the Laplacian, and $f$ is bounded in $\Sigma$,  there exists a sequence $\{x_i\}$
 $$
 f(x_i)\geq \sup_\Sigma f-\frac{1}{i},\quad \Vert \nabla f(x_i)\Vert<\frac{1}{i},\quad \triangle_\Sigma f(x_i)<\frac{1}{i}.
 $$
 Then, 
 $$
 \frac{1}{i}\geq (n-1){\rm sn}_k(t(x_i))\frac{{\rm sn}_\kappa'(t(x_i))}{{\rm sn}_\kappa(t(x_i))}-{\rm sn}_k(t(x_i))\sup_\Sigma\Vert\vec H\Vert-\frac{2}{ni} 
 $$
letting  $i$ tend to $\infty$, taking into account that $t(x_i)\to R^*$ when $i\to\infty$,
$$
\sup_\Sigma\Vert \vec H\Vert \geq (n-1) \frac{{\rm sn}_\kappa'(R^*)}{{\rm sn}_\kappa((R^*)}\geq h_\kappa(R),
$$
and the theorem is proved.
\end{proof}
\subsection*{Acknowledgments}
I am grateful to professor Pacelli Bessa for his valuable help and for his useful comments and suggestions during the preparation of the present paper.
\def\cprime{$'$} \def\polhk#1{\setbox0=\hbox{#1}{\ooalign{\hidewidth
  \lower1.5ex\hbox{`}\hidewidth\crcr\unhbox0}}}
  \def\polhk#1{\setbox0=\hbox{#1}{\ooalign{\hidewidth
  \lower1.5ex\hbox{`}\hidewidth\crcr\unhbox0}}}
  \def\polhk#1{\setbox0=\hbox{#1}{\ooalign{\hidewidth
  \lower1.5ex\hbox{`}\hidewidth\crcr\unhbox0}}} \def\cprime{$'$}
  \def\cprime{$'$} \def\cprime{$'$} \def\cprime{$'$} \def\cprime{$'$}


\begin{thebibliography}{10}

\bibitem{Alarc2015}
A. Alarc\'on and F. Forstneri{\v c}.
\newblock The {C}alabi-{Y}au problem, null curves, and {B}ryant surfaces.
\newblock {\em Math. Ann.}, 363(3-4):913--951, 2015.

\bibitem{Alarc2008}
A.~Alarcón, L.~Ferrer, and F.~Martín.
\newblock Density theorems for complete minimal surfaces in ℝ3.
\newblock {\em Geometric and Functional Analysis}, 18(1):1--49, 2008.

\bibitem{Alias2009}
L.~J. Al\'ias, G.~P. Bessa, and M. Dajczer.
\newblock The mean curvature of cylindrically bounded submanifolds.
\newblock {\em Math. Ann.}, 345(2):367--376, 2009.

\bibitem{Alias2016}
L.J. Alías, P.~Mastrolia, and M.~Rigoli.
\newblock Maximum principles and geometric applications.
\newblock {\em Springer Monographs in Mathematics}, pages 1--570, 2016.

\bibitem{Atsuji1999}
A.~Atsuji.
\newblock Remarks on harmonic maps into a cone from a stochastically complete
  manifold.
\newblock {\em Proceedings of the Japan Academy Series A: Mathematical
  Sciences}, 75(7):105--108, 1999.

\bibitem{Daniel2007}
B. Daniel.
\newblock Isometric immersions into 3-dimensional homogeneous manifolds.
\newblock {\em Comment. Math. Helv.}, 82(1):87--131, 2007.

\bibitem{Ferrer2012}
L.~Ferrer, F.~Martín, and W.H. Meeks.
\newblock Existence of proper minimal surfaces of arbitrary topological type.
\newblock {\em Advances in Mathematics}, 231(1):378--413, 2012.

\bibitem{GriExp}
A. Grigor{\cprime}yan.
\newblock Analytic and geometric background of recurrence and non-explosion of
  the {B}rownian motion on {R}iemannian manifolds.
\newblock {\em Bull. Amer. Math. Soc. (N.S.)}, 36(2):135--249, 1999.

\bibitem{JorgeXavier80}
L. P. de~M. Jorge and F. Xavier.
\newblock A complete minimal surface in {${\bf R}^{3}$}\ between two parallel
  planes.
\newblock {\em Ann. of Math. (2)}, 112(1):203--206, 1980.

\bibitem{Lerma2017}
A.~M. Lerma and J. M. Manzano.
\newblock Compact stable surfaces with constant mean curvature in {K}illing
  submersions.
\newblock {\em Ann. Mat. Pura Appl. (4)}, 196(4):1345--1364, 2017.

\bibitem{Manzano2017}
J.M. Manzano and B.~Nelli.
\newblock Height and area estimates for constant mean curvature graphs in
  $\mathbb{E}(\kappa,\tau)$-spaces.
\newblock {\em Journal of Geometric Analysis}, 27(4):3441--3473, 2017.

\bibitem{Morales1}
F. Mart{\'{\i}}n and S. Morales.
\newblock Complete proper minimal surfaces in convex bodies of {$\mathbb{R}^3$}.
\newblock {\em Duke Math. J.}, 128(3):559--593, 2005.

\bibitem{Morales2}
F. Mart{\'{\i}}n and S. Morales.
\newblock Complete proper minimal surfaces in convex bodies of {$\mathbb{R}^3$}.
  {II}. {T}he behavior of the limit set.
\newblock {\em Comment. Math. Helv.}, 81(3):699--725, 2006.

\bibitem{Nadi}
N. Nadirashvili.
\newblock Hadamard's and {C}alabi-{Y}au's conjectures on negatively curved and
  minimal surfaces.
\newblock {\em Invent. Math.}, 126(3):457--465, 1996.

\bibitem{OneillSubmersion}
B. O'Neill.
\newblock The fundamental equations of a submersion.
\newblock {\em Michigan Math. J.}, 13:459--469, 1966.

\bibitem{Oneill}
B. O'Neill.
\newblock {\em Semi-{R}iemannian geometry}, volume 103 of {\em Pure and Applied
  Mathematics}.
\newblock Academic Press Inc. [Harcourt Brace Jovanovich Publishers], New York,
  1983.
\newblock With applications to relativity.

\bibitem{Petersen}
P. Petersen.
\newblock {\em Riemannian geometry}, volume 171 of {\em Graduate Texts in
  Mathematics}.
\newblock Springer-Verlag, New York, 1998.

\bibitem{Pigola2003}
S. Pigola, M. Rigoli, and A. ~G. Setti.
\newblock A remark on the maximum principle and stochastic completeness.
\newblock {\em Proc. Amer. Math. Soc.}, 131(4):1283--1288, 2003.

\bibitem{Scott1983}
P. Scott.
\newblock The geometries of {$3$}-manifolds.
\newblock {\em Bull. London Math. Soc.}, 15(5):401--487, 1983.

\bibitem{Tokuomaru2007}
M.~Tokuomaru.
\newblock Complete minimal cylinders properly immersed in the unit ball.
\newblock {\em Kyushu Journal of Mathematics}, 61(2):373--394, 2007.

\end{thebibliography}
\end{document}